 \def\QEDhereeqn{\eqno\let\eqno\relax\let\leqno\relax\let\veqno\relax\hbox{\QED}}
 \def\QEDopenhereeqn{\eqno\let\eqno\relax\let\leqno\relax\let\veqno\relax\hbox{\QEDopen}}
 \newcommand{\bs}{\boldsymbol}
 \newcommand{\mc}{\mathcal}
 \renewcommand{\emph}{\textit}
 \newcommand\numberthis{\addtocounter{equation}{1}\tag{\theequation}}
 \newcommand{\0}{\bs 0}
 \def\1{{\bs 1}}
 \def\argmin{\mathop{\rm argmin}}
\def\min{\mathop{\rm min}}
 \newcommand{\proj}{\mathrm{proj}}
 \def\R{\mathbb{R}}
 \DeclareSymbolFontAlphabet{\mathbbm}{bbold}
 \DeclareSymbolFontAlphabet{\mathbb}{AMSb}%
	 \newcommand\tsup[2][2]{%
	 	\def\useanchorwidth{T}%
	 	\ifnum#1>1%
	 	\stackon[-.5pt]{\tsup[\numexpr#1-1\relax]{#2}}{\scriptscriptstyle\sim}%
	 	\else%
	 	\stackon[.5pt]{#2}{\scriptscriptstyle\sim}%
	 	\fi%
	 }
\newacronym{KKT}{KKT}{Karush--Kuhn--Tucker}
\newacronym{ADMM}{ADMM}{alternating direction method of multipliers}
\newacronym{OPF}{OPF}{optimal power flow}
\newacronym{OPFP}{OPFP}{optimal power flow problem}
\newacronym{NUM}{NUM}{network utility maximization}
\newacronym{LMI}{LMI}{linear matrix inequality}
\newacronym{BMI}{BMI}{bilinear matrix inequality}
\newacronym{LM}{LM}{Lyapunov-Metzler}
\newacronym{SDP}{SDP}{semidefinite programming}
\newacronym{LTI}{LTI}{linear time invariant}
\newacronym{MJLS}{MJLS}{Markov jump linear system}
\newacronym{PID}{PID}{proportional-integral-derivative}
\newacronym{PPA}{PPA}{proximal-point algorithm}
\newacronym{PPPA}{PPPA}{preconditioned proximal-point algorithm}
\newacronym{PPP}{PPP}{preconditioned proximal-point}
\newacronym{NE}{NE}{Nash equilibrium}
\newacronym{GNE}{GNE}{generalized Nash equilibrium}
\newacronym{v-GNE}{v-GNE}{variational GNE}
\newacronym{ISS}{ISS}{input-to-state stability}
\newacronym{OFO}{OFO}{Online Feedback Optimization}
 \def\K{{\mc K}} 
 \def\seq{\succcurlyeq}
 \newlist{thmlist}{enumerate}{1}
 \setlist[thmlist]{label=(\roman{thmlisti}), ref=\thethm(\roman{thmlisti}),noitemsep}
 \newlist{lemlist}{enumerate}{1}
 \setlist[lemlist]{label=(\roman{lemlisti}), ref=\thelem(\roman{lemlisti}),noitemsep}
 \newlist{asmlist}{enumerate}{1}
 \setlist[asmlist]{label=(\roman{asmlisti}), ref=\theassumption(\roman{asmlisti}),noitemsep,nosep,leftmargin=*} 
 \newtheorem{lemma}{Lemma}
 \newtheorem{theorem}{Theorem}
 \newtheorem{remark}{Remark}
 \newtheorem{assumption}{Assumption}
\newtheorem{corollary}{Corollary}
\newtheorem{proposition}{Proposition}
\patchcmd{\smallmatrix}{\thickspace}{\kern.5em}{}{}
\DeclareSymbolFont{myletters}{OML}{ztmcm}{m}{it}
\DeclareMathSymbol{\uplambda}{\mathord}{myletters}{"15}
\title{\LARGE \bf
Online Feedback Optimization for Monotone Systems\\
without Timescale Separation
}
\author{Mattia Bianchi and  Florian D\"orfler
	\thanks{ M. Bianchi and Florian D\"orfler are with the Automatic Control Laboratory, ETH Zürich, Switzerland (\texttt{mbianch@ethz.ch}, \texttt{doerfler@ethz.ch}). This work is supported by ETH Zürich funds.}
}
\begin{document}

\maketitle
\thispagestyle{empty}
\pagestyle{empty}

\begin{abstract}
\gls{OFO} steers a dynamical plant to a cost-efficient steady-state,  only relying on input-output sensitivity information, rather than on a full plant model. Unlike traditional feedforward approaches, \gls{OFO}
leverages real-time measurements from the plant, thereby inheriting  the robustness and adaptability of feedback control.
Unfortunately, existing theoretical guarantees for \gls{OFO} 
assume that the controller operates on a slower timescale than the plant, which can affect responsiveness and transient performance. In this paper, we focus on relaxing this ``timescale separation'' assumption. 
Specifically, we consider  the class of monotone systems, and we prove that \gls{OFO} can achieve  an optimal operating point, regardless of the time constants of controller and plant. By  leveraging  a small-gain theorem for monotone systems, we derive several sufficient conditions for global convergence.  Notably, these conditions  depend only on the  steady-state behavior of the plant, and they are entirely independent of the transient dynamics. 
\end{abstract}

\section{Introduction}
\glsreset{OFO}
\gls{OFO} \cite{Hauswirth2021} iteratively steers a dynamical plant to an efficient operating point, by directly interconnecting the physical plant with optimization algorithms. 
Unlike feedforward approaches,  where  reference trajectories are recomputed offline at slow intervals, \gls{OFO} updates control inputs in real-time, based on measurements from the plant. This feedback mechanism enables robustness against model inaccuracies and disturbances, as well as adaptability to nonstationary environments and cost functions. For this reason, \gls{OFO} has been adopted in many engineering applications, most notably  power networks \cite{simpson2020stability,chen2020distributed,Emiliano2016} (including industrial deployment \cite{Ortmann_Deployment_2023}), but also smart buildings \cite{Belgioiosogiuseppe2021}, communication systems \cite{wang2011control}, traffic control \cite{bianchin2021time}.

Despite its practical appeal, \gls{OFO} still faces several theoretical and implementation challenges. A critical limitation, and the focus of this paper, is the reliance on the so-called  \emph{timescale separation}: it is typically assumed  that the plant evolves on a much faster timescale than the optimization algorithm/controller. Under this assumption, one can 
neglect transient dynamics, and  approximate the plant   through its steady-state input-output map \cite{Hauswirth2021}.  While this simplification facilitates the analysis, it is only valid for systems with extremely fast dynamics, such  as power grids. 

On the contrary, when controller and plant evolve on the same timescale, their dynamic interaction has to be taken into account, since it
could lead to instability, as it was shown in \cite{Hauswirthadrian2021,Belgioioso_FES_2024}. 
To deal with this issue, several works \cite{menta2018stability,Hauswirthadrian2021,Belgioioso_FES_2024,Colombino2020} enforce stability by slowing down the controller ---either by employing small gains and step sizes, or by reducing the update rate. Under these conditions, stability of the closed-loop can be studied via singular perturbation analysis.

Yet, slow controllers are not desirable. Matching the plant timescale could improve transient response, enhance adaptation to time-varying conditions, and  reduce settling time \cite{Picallo2023}. This raises a fundamental question: is it possible to guarantee stability for \gls{OFO} without requiring the controller to be slower than the plant?

In our recent work \cite{Bianchi_InfOfo}, we  proposed a condition to ensure stability for \gls{OFO} without requiring any timescale separation, based on a $\max$-type Lyapunov function. In this paper, we take a  further step  forward, by adopting a different approach, based on the theory of monotone (or ``order preserving'') systems. 
Monotone systems are a prominent class of dynamical models, with widespread applications, particularly in systems biology \cite{Angeli2014,Angeli2004Detection,Nikolaev2016}, but also in compartmental dynamics, such as in epidemiology or chemical reaction networks \cite{haddad2010nonnegative}. Notably, they also encompass the  class of positive linear systems \cite{Angeli_MCS_2003}.

\emph{Contributions}: In this paper, we consider an optimal steady-state control problem for a monotone plant with box input constraints.  
Under suitable assumptions,  we show that \gls{OFO} drives the plant to its optimal equilibrium \emph{for any value of the control gain}, 
regardless of the relative timescales of the controller and plant.

Our main convergence criterion (Theorem~\ref{th:main}) relies on a small-gain result for monotone systems \cite{Angeli_MCS_2003}. In addition, we derive several more explicit and easier to verify sufficient conditions.
 As in our earlier work \cite{Bianchi_InfOfo}, these conditions can often be enforced by introducing a sufficiently large regularization in the control objective (Remark~\ref{rem:regularization}). However, differently from \cite{Bianchi_InfOfo}, the bounds we obtain are not only independent of the time constants of plant and controller, but  are also unaffected by any other transient property, as they depend solely on  the steady-state maps.

The paper is organized as follows. 
In Section~\ref{sec:notation}, we review some necessary background on monotone systems. In Section~\ref{sec:problemstatement}, we discuss the main idea of \gls{OFO} and present our setup. Section~\ref{sec:main} introduces our main convergence criterion. In Section~\ref{sec:special}, we provide several sufficient conditions, which are easier to verify and interpret. 
Section~\ref{sec:numerics}  illustrates our results via numerical examples. 
Finally, in Section~\ref{sec:conclusion}, we discuss possible extensions and research directions. All the proofs are provided in the appendix.

\subsection{Notation and background}\label{sec:notation}
For a differentiable map $f:\R^n \rightarrow \R^m$, 
$\nabla f(x) \in \R^{m\times n}$ denotes its Jacobian, i.e.,  the matrix of partial derivatives of $f$ computed at $x\in \R^n$. If $f$ is scalar (i.e., $m=1$), we also denote by $\nabla f(x) \in\R^n$ its gradient at $x$, with some abuse of notation but no ambiguity. If $x'$ is a subset of variables, then $\nabla_{x'} f$ denotes its Jacobian/gradient with respect to $x'$. We denote by $x_i$ or $[x]_i$ the $i$-th scalar component of a vector $x$. 
For a  closed convex set $S\subseteq \R^n$,  $\proj_{S}:\R^n \rightarrow S$ is the Euclidean projection onto $S$;
 $\mathrm{T}_S:S \rightrightarrows \R^n:x\mapsto \operatorname{cl}({\bigcup_{\delta>0}\frac{1}{\delta}(S-x)})$ is the tangent cone  of $S$, where $\operatorname{cl}(\cdot)$ denotes the set closure.
The projection on the tangent cone of $S$ at $x$ is $\Pi_{S}(x,v):=\proj_{\mathrm{T}_S(x)}(v)=\lim_{\delta\rightarrow 0^+}\textstyle \frac{\proj_{S}(x+\delta v)-x}{\delta}$.


\emph{Projected dynamical systems }\cite{Cojocaru2003}:
Given a map $\mc{F}:\R^n\rightarrow \R^n$ and a closed convex set $S\subseteq \R^n$, we consider the projected dynamical system
\begin{align}\label{eq:projdyn} \dot{x}=\Pi_S(x,\mc{F}(x)), \qquad x(0)=x_0\in S.
\end{align}
In \eqref{eq:projdyn}, the projection operator is possibly discontinuous on the boundary of
$S$. If $\mc{F}$ is Lipschitz on $S$, then the system \eqref{eq:projdyn} admits a unique global Carathéodory solution, i.e., there exists a unique absolutely continuous function $x:\R_{\geq 0}\rightarrow \R^n$ such that $x(0)=x_0$, $\dot{x}(t)=\Pi_S(x,\mc{F}(x))$ for almost all $t$. Moreover, $x(t)\in S$ for all $t\geq 0$, as on the boundary of $S$ the projection operator
restricts the flow of $\mc{F}$ such that the solution of \eqref{eq:projdyn} remains in $S$ (while $\Pi_S(x,\mc{F}(x))=\mc{F}(x)$ if $x\in \rm{int}(S)$).

\emph{Monotone control systems} \cite{EncisoSontag_Global2006}: Let $\mathcal{K}\subset \R^n$ be an orthant, i.e., $\K = \{x \in \R^n \mid  (-1)^{\epsilon_i} x_i  \geq 0 , \forall i \in \{1,2,\dots,n\} \}$, for some binary vector $(\epsilon_1,\epsilon_2,\dots\epsilon_n)\in \{0,1\}^n$. Then, $\K$ induces
the partial order ``$\succcurlyeq_{\mc K}$''  in $\R^n$, where 
 \begin{align}
    x \seq_{\mc K} y \iff x - y \in \mc K. \end{align}
For instance, the nonnegative orthant $\K = \R^n_{\geq 0}$ induces the standard (elementwise) ordering. 
Let us consider a dynamical system 
\begin{align}\label{eq:introsystem}
    \dot x = f (x,u), \quad y = g(x),
\end{align} with state space $X \subseteq \R^n$, input space $U \subseteq \R^m$, output space $Y \subseteq \R^p$. For a measurable, locally bounded input function $u:\R_{\geq 0}\rightarrow U$ and an initial condition $x_0 \in \R^n$, let $x(t,x_0,u) \in X$  be the corresponding solution of   \eqref{eq:introsystem} at time $t$ (assuming it is well defined; we always consider solutions in the Carath\'eodory sense in this paper). Consider the orders induced by  three orthants  $\K_U \subset \R^m$, $\K_X \subset \R^n$, $\K_Y \subset \R^p$. Then, the dynamical system is \emph{monotone} if: (1) for any two inputs $u,u'$ such that $u(t) \seq_{\K_U} u'(t)$ for almost all $t$, and for any two initial conditions $x_0 \seq_{\K_X} x_0'$, it holds that: \newline (1) for all $t \geq 0$, 
\begin{align}
x(t,x_0,u) \seq_{K_X} x(t,x_0',u');
\end{align}
(2) the output map $g$ is nondecreasing, namely \begin{align}
    x \seq_{K_X} x'  \Rightarrow g(x) \seq_{\K_Y} g(x').
\end{align} 
There are several ways to check if a system is monotone without computing its solutions, e.g., see \cite[§2.1]{Angeli2004}. For instance, a smooth system is monotone with respect to the standard ordering ($\K_U =\R^m_{\geq 0}$ $, \K_X= \R^n_{\geq 0}$, $\K_Y =\R^p_{\geq 0}$) if and only if, for all $x\in X$, $u\in U$,
\begin{align}\label{eq:MonCondition}
\begin{aligned}\textstyle (\forall i\neq j) &  \textstyle  \qquad \frac{\partial f_i}{\partial x_j} (x,u) \geq 0  
\\
(\forall i,j,l) & \textstyle  \qquad  \frac{\partial f_i}{\partial u_j} (x,u) \geq 0, \quad \frac{\partial g_l}{\partial x_i}(x) \geq 0.
\end{aligned}
\end{align}
In particular, a linear system $\dot x = Ax+Bu$, $y = Cx$, is monotone if $A$ is Metzler and $B,C$ are nonnegative. 

We say that system \eqref{eq:introsystem} has a  steady-state map $k_x :U\rightarrow X$ if, for every constant input $\hat u(t) \equiv \bar u  \in U$ and independently of $x_0\in\R^n$, the solution $x(t,x_0,\hat u )$ converges to $k_x ( \bar u)$ as $t\rightarrow \infty$. In this case, we define the  steady state output map $k_y$ as $k_y = g\circ k_x$.

We will use the following result, which can be obtained as a special case of \cite[Th.~2]{EncisoSontag_Global2006} (see  Appendix~\ref{app:proofofSGT} for a proof). 
\medskip

\begin{lemma}[Small-gain theorem]\label{lem:SGT}
    Let the following conditions hold:
    \begin{enumerate}
        \item $U = Y = \R^m$, and $X$ is a convex closed (possibly unbounded) box, i.e., $X$ is the  Cartesian product of closed intervals in $\R$;
        \item the system \eqref{eq:introsystem} is monotone with respect to some orthants $\K_U,\K_X,\K_Y$, and $\K_Y = -\K_U$ (i.e., the output is ordered by the opposite order of the input); 
        \item The system admits a continuous steady-state map $k_x$; furthermore, $g$ is continuous. 
        \item (\emph{small-gain condition}) For any $u_0\in U$, the iteration
        \begin{align}
            u_{n+1} = k_y(u_n) = g(k_x(u_n)) 
        \end{align}
        converges to a unique $u^\star \in U$. 
    \end{enumerate}
    Then, all bounded solutions of the closed-loop system
    \begin{align*}
        \dot x = f(x,g(x))
    \end{align*}
    converge to $x^\star = k_x (u^\star) $.
\end{lemma}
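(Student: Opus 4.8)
The plan is to deduce the statement from the general monotone small-gain theorem \cite[Th.~2]{EncisoSontag_Global2006}, so the real work is to check that conditions (1)--(4) translate into the hypotheses of that result; this is what Appendix~\ref{app:proofofSGT} carries out. Two observations drive everything. First, since the plant is monotone and admits the continuous steady-state map $k_x$, letting $t\to\infty$ in the monotonicity relations for constant inputs shows that $k_x$ is nondecreasing from $(U,\K_U)$ to $(X,\K_X)$; composing with the nondecreasing output map $g$, the steady-state input--output characteristic $k_y=g\circ k_x:\R^m\to\R^m$ is well defined and continuous, and because $\K_Y=-\K_U$ it is \emph{antitone} with respect to $\K_U$, i.e.\ $u\seq_{\K_U}u'$ implies $k_y(u')\seq_{\K_U}k_y(u)$. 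Second, condition (4) is precisely the discrete small-gain requirement that this characteristic, iterated, has a globally attractive unique fixed point $u^\star$; note that $u^\star=k_y(u^\star)=g(k_x(u^\star))$, so $x^\star:=k_x(u^\star)$ satisfies $f(x^\star,g(x^\star))=f(x^\star,u^\star)=0$ and is indeed an equilibrium of the closed loop. Conditions (1)--(3) then supply the remaining structural ingredients ($U=Y=\R^m$, an order-convex box state space, continuity, and existence of the characteristic) needed to invoke \cite[Th.~2]{EncisoSontag_Global2006}.

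For self-containedness I would also sketch the mechanism behind that theorem, a ``boxing-in'' induction. Fix a bounded closed-loop solution $x(\cdot)$ and put $y(t)=g(x(t))$, which is bounded; since $X$ is a box, $\{x(t)\}$ and $\{y(t)\}$ lie in compact order intervals, so there exist endpoints with $\overline u_0\seq_{\K_U} y(t)\seq_{\K_U}\underline u_0$ for all $t\ge0$. Then iterate: supposing $y(t)$ lies in the $\K_U$-order interval with endpoints $\overline u_n,\underline u_n$ for all $t\ge T_n$, compare the closed-loop trajectory on $[T_n,\infty)$ ---viewed as the plant driven by the measurable input $y(\cdot)$ from the initial state $x(T_n)$--- with the open-loop trajectories driven by the constant inputs $\overline u_n$ and $\underline u_n$ from the same initial state. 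Monotonicity orders these three trajectories, and since the two constant-input ones converge to $k_x(\overline u_n)$ and $k_x(\underline u_n)$, the orthant-order $\limsup$ and $\liminf$ of $x(t)$ are trapped between $k_x(\underline u_n)$ and $k_x(\overline u_n)$; pushing this through $g$ and using $\K_Y=-\K_U$ produces some $T_{n+1}\ge T_n$ and refined endpoints $\overline u_{n+1}=k_y(\underline u_n)$, $\underline u_{n+1}=k_y(\overline u_n)$ with $y(t)$ in the corresponding interval for all $t\ge T_{n+1}$.

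Consequently the interleaved sequence $\overline u_0,\underline u_1,\overline u_2,\underline u_3,\dots$ is exactly the orbit of the iteration in condition (4) started from $\overline u_0$, and $\underline u_0,\overline u_1,\underline u_2,\dots$ the orbit started from $\underline u_0$, so both converge to $u^\star$. Hence the order intervals collapse to $\{u^\star\}$, whence $y(t)\to u^\star$ and, by continuity of $k_x$ and the monotone sandwich, $x(t)\to k_x(u^\star)=x^\star$.

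I expect the main obstacle to be the bookkeeping in the inductive step: keeping the orientations of the three orthant orders consistent, handling that the constant-input trajectories converge only asymptotically (so one must work with $\limsup/\liminf$ in the orthant order, or equivalently with $\omega$-limit sets, rather than with clean finite-time inclusions), and justifying that restarting the comparison at time $T_n$ from the state $x(T_n)$ is legitimate ---well-posedness and forward invariance of $X$, which is exactly where the box hypothesis (1) is used. Proving that $k_y$ is continuous and antitone, and that the asymptotic brackets may be transported through $g$, are the remaining routine-but-delicate points.
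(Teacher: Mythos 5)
Your proposal follows essentially the same route as the paper: reduce the lemma to \cite[Th.~2]{EncisoSontag_Global2006} by checking that conditions (1)--(4) imply its hypotheses (orthant-induced orders, box state space with $U=Y=\R^m$, continuous characteristic, global attractivity of the discrete iteration), and your added ``boxing-in'' sketch is just an outline of that theorem's internal mechanism, with the delicate asymptotic bookkeeping correctly flagged. The only detail the paper makes explicit that you gloss over is the verification that $k_x$ and $g$ map bounded sets to bounded sets (needed for H4), which follows from their continuity on the closed sets $U$ and $X$; this is minor and does not change the approach.
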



\medskip

\section{Problem statement} \label{sec:problemstatement}
We consider the dynamical system (or plant)
\begin{subequations}\label{eq:plant}
    \begin{align}
        \dot x & = f(x,u) \\
        y & = g(x),
    \end{align}
\end{subequations}
with state $x\in\R^n$, input $u\in \R^m$, and output $y\in \R^p$. We assume that the plant has a well defined steady-state map: this is a fundamental assumption in  \gls{OFO}  \cite[Asm.~2.1]{Hauswirthadrian2021}. 

\begin{assumption}\label{asm:plant}
   The mappings $f:\R^n \times \R^m \rightarrow \R^n$ and $g:\R^n \rightarrow \R^p$ are locally Lipschitz. 
   The plant \eqref{eq:plant} admits a well defined  steady-state map $k_x:\R^m\rightarrow \R^n$ and steady-state output map $k_y \coloneqq g\circ k_x:\R^m \rightarrow \R^p$: for any constant input $\hat u(t) \equiv \bar u\in \R^m$, the solution to  \eqref{eq:plant} converges to $k_x(\bar u)$.  
   Furthermore, $k_x$ is continuous, $k_y$ is continuously differentiable, and the \emph{sensitivity} $\nabla k_y : \R^m \rightarrow \R^{p\times m}$ is locally Lipschitz. 
\end{assumption}

The control objective is to steer the plant \eqref{eq:plant} to a solution of the optimization problem 
\begin{subequations}
\label{eq:opt1}
\begin{align}
    \min_{u\in \mathcal U,y\in\R^p} & \quad   \Phi(u,y) \coloneqq \Phi_u(u) + \Phi_y(y)
    \\[-0.3em]
    \textnormal{s.t.}   & \quad   y = k_y(u),
\end{align}
\end{subequations}
where $\Phi$ is a cost function, $\mathcal{U}$ is the set of feasible inputs, and the steady-state constraint  $y = k_y(u)$ ensures that any solution to \eqref{eq:opt1} is an input-output equilibrium for the plant \eqref{eq:plant}. Equivalently, \eqref{eq:opt1} can be recast as
\begin{align}\label{eq:opt2}
    \min_{u\in \mathcal U} & \quad  \tilde \Phi(u),
\end{align} 
where $\tilde \Phi(u)  \coloneqq \Phi(u,k_y(u))$. For technical reasons, we restrict our attention to box-constrained problems. 
 \begin{assumption}\label{asm:cost}
     The function $\Phi:\R^m\times \R^p \rightarrow \R$ is continuously differentiable, and its gradient  $\nabla \Phi$ is locally Lipschitz. The set $\mathcal U \subset \R^m$ is a compact convex box set, i.e., $\mathcal{U} = [u_1^{\min},u_1^{\max}] \times [u_2^{\min},u_2^{\max}] \times \dots \times [u_m^{\min},u_m^{\max}]$. 
 \end{assumption}

We note that Assumption~\ref{asm:cost} ensures existence of a solution for \eqref{eq:opt2}, hence of \eqref{eq:opt1}. 
The projected gradient flow for \eqref{eq:opt2} is $\dot u = \Pi_{\mathcal U}(u,-\alpha \nabla \tilde \Phi (u))$, or, by the chain rule,
\begin{align}\label{eq:gradientflow}
    \dot u = \Pi_{\mathcal U} \left(u, - \alpha H(u) \nabla \Phi (u,k_y(u)) \right)
\end{align}
where $\nabla \Phi(u,y) = \begin{bmatrix} \nabla \Phi_u(u)^\top & \nabla \Phi_y(y)^\top \end{bmatrix}^\top$,
\begin{align}
    H(u) \coloneqq
   \begin{bmatrix}
       I_m &  \nabla k_y(u)^\top
    \end{bmatrix},
\end{align}
 $\alpha >0$ is a tuning gain, and the projection on the tangent cone can be computed explicitly as
\begin{align}
    [\Pi_{\mathcal U}(u,v)]_i = \begin{cases}
        v_i & \text{if } u_i^{\min} < u_i < u_i^{\max}
        \\
        \max\{0,v_i\} & \text{if } u_i = u_i^{\min}
        \\
        \min\{0,v_i\} & \text{if } u_i = u_i^{\max}.
    \end{cases}\label{eq:explicit_ptc}
\end{align}

\gls{OFO} proposes to replace the open-loop gradient flow in \eqref{eq:gradientflow} with a closed-loop controller,
obtained by substituting  the unknown steady-state output map $k_y$ with measurements from the plant \eqref{eq:plant}, namely 
\begin{align}\label{eq:OFO}
    \dot u = \Pi_{\mathcal U} \left (u,- \alpha  H (u) \nabla \Phi (u,y) \right) .
\end{align}
This feedback approach guarantees continuous adaptation (e.g., to time-varying disturbances), while only requiring the knowledge of the sensitivity $\nabla k_y$. Importantly, the equilibria of the closed-loop system \eqref{eq:plant},\eqref{eq:OFO} correspond to the critical points of \eqref{eq:opt2} (i.e., points $u^\star$ such that $\langle \nabla \tilde \Phi(u^\star), u -u^\star \rangle \geq 0$ for all $u \in \mathcal U$). 

Under opportune assumptions, closed-loop stability can also be ensured, by choosing a small-enough gain $\alpha$ for the controller \eqref{eq:OFO}---based on singular-perturbation analysis \cite{Hauswirthadrian2021,Colombino2020}. The idea is that, for small $\alpha$, the time constant of the controller becomes much larger than that of the plant, and thus the plant is always approximately at steady-state. In this case, $y \approx k_y(u)$, and the OFO controller \eqref{eq:OFO} approximates the gradient flow \eqref{eq:gradientflow}.

On the downside, enforcing this timescale separation affects the responsiveness of the controller \cite{Picallo2023}, which could take a long time to steer the system to an equilibrium or adapt to changes in the operation (e.g.,  time-varying costs or disturbances), resulting in degraded transient performance. 
Hence, it would be highly valuable to allow controller and plant to evolve on the same timescale (i.e., enabling  large values of $\alpha$), without jeopardizing asymptotic optimality or stability. 

In this paper, we aim to relax timescale separation by exploiting some structure of the plant \eqref{eq:plant}. 
In particular, we focus on monotone systems, as postulated next. 
\begin{assumption}\label{asm:monotone}
    The plant \eqref{eq:plant} is monotone with respect to the order induced by three orthants  $\K_U\subset \R^m$, $\K_X\subset \R^n$, $\K_Y\subset \R^p$.
\end{assumption}

\section{OFO for monotone systems}
\label{sec:main}
In this section, we show that monotonicity of the plant in Assumption~\ref{asm:monotone} can be exploited to completely avoid  timescale separation  in \gls{OFO}. Specifically, under suitable conditions, the closed-loop \eqref{eq:plant},\eqref{eq:OFO} converges to an equilibrium \emph{for any value of $\alpha$}. 

We start by showing that the interconnected system is well posed. All the proofs are provided in the appendix.

\begin{proposition}\label{prop:wellposedness}
    For any $x(0)\in \R^n$, $u(0) \in \mathcal U$, the closed-loop system \eqref{eq:plant},\eqref{eq:OFO} admits a unique global solution $x(t), u(t)$. Furthermore, the solution is bounded, and $u(t) \in \mathcal{U}$ for all $t\geq 0$. 
\end{proposition}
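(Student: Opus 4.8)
The plan is to treat the closed loop \eqref{eq:plant},\eqref{eq:OFO} as a projected dynamical system on a convex set, extract a unique local solution from the (local) Lipschitz hypotheses, and then use monotonicity to derive an a~priori bound on the state that upgrades this to a bounded global solution. To set up, write $z=(x,u)$ and $S\coloneqq\R^n\times\mathcal U$, which is closed and convex by \Cref{asm:cost}. Then \eqref{eq:plant},\eqref{eq:OFO} reads $\dot z=\Pi_S\bigl(z,\mathcal F(z)\bigr)$ with $\mathcal F(x,u)\coloneqq\bigl(f(x,u),\,-\alpha H(u)\nabla\Phi(u,g(x))\bigr)$, since $\mathrm{T}_S(x,u)=\R^n\times\mathrm{T}_{\mathcal U}(u)$ and hence the projection acts as the identity on the $x$-block. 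By \Cref{asm:plant,asm:cost}, $f$, $g$, $\nabla k_y$ and $\nabla\Phi$ are locally Lipschitz, so $\mathcal F$ is locally Lipschitz (being built from them by sums, products and compositions). The theory of projected dynamical systems recalled in \Cref{sec:notation} (see \cite{Cojocaru2003}) then furnishes a unique Carath\'eodory solution $z(\cdot)=(x(\cdot),u(\cdot))$ on a maximal interval $[0,T)$, with $z(t)\in S$, i.e.\ $u(t)\in\mathcal U$, for all $t\in[0,T)$.

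Next I would bound $x(\cdot)$. As $\mathcal U$ is a box and $\K_U$ an orthant, $\mathcal U$ admits a $\seq_{\K_U}$-greatest element $\bar u^{+}$ and a $\seq_{\K_U}$-least element $\bar u^{-}$ (the suitable corners of the box), so that $\bar u^{-}\seq_{\K_U}u\seq_{\K_U}\bar u^{+}$ for every $u\in\mathcal U$; in particular this holds for $u=u(t)$, $t\in[0,T)$. On $[0,T)$ the state component is exactly the plant response $x(t,x(0),u(\cdot))$ to the measurable, locally bounded input $u(\cdot)$ just constructed, so applying the monotonicity property of \Cref{asm:monotone} with common initial state $x(0)$ and constant comparison inputs $\bar u^{\pm}$ yields
\[
x(t,x(0),\bar u^{-})\;\seq_{\K_X}\;x(t)\;\seq_{\K_X}\;x(t,x(0),\bar u^{+})\qquad\text{for all }t\in[0,T).
\]
By \Cref{asm:plant} the constant-input trajectories $\psi^{\pm}(t)\coloneqq x(t,x(0),\bar u^{\pm})$ are defined for all $t\ge0$ and converge to $k_x(\bar u^{\pm})$, hence lie in a compact set $B\subset\R^n$; since $\K_X$ is an orthant, the resulting componentwise sandwich confines $x(t)$, for all $t\in[0,T)$, to a bounded set determined by $B$ alone.

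To globalize, note that together with $u(t)\in\mathcal U$ (compact) the above keeps $z(t)$ in a fixed compact $K\subset S$ on $[0,T)$; as $\mathcal F$ is bounded on $K$ and $\|\Pi_S(z,v)\|\le\|v\|$, the map $z(\cdot)$ is Lipschitz on $[0,T)$, so if $T<\infty$ it would extend continuously to $z(T)\in K\subset S$ and, restarting local existence at $z(T)$, past $T$ --- contradicting maximality. Hence $T=+\infty$, the displayed bounds and $u(t)\in\mathcal U$ hold for all $t\ge0$, and uniqueness is inherited from the local statement. The only substantive step is the state bound: the idea is that monotonicity lets one dominate the closed-loop state --- whose driving input is a priori known only to remain in the compact box $\mathcal U$ --- by the two open-loop trajectories at the extreme corners of $\mathcal U$, which \Cref{asm:plant} forces to be bounded. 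The subtlety requiring care is that this comparison must be invoked with the input $u(\cdot)$ generated by the already-constructed local solution, so one has to argue on $[0,T)$ first and only then conclude $T=+\infty$, to avoid circularity.
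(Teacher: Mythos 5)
Your proposal is correct and follows essentially the same route as the paper's proof: cast the closed loop as a projected dynamical system on $\R^n\times\mathcal U$ with a locally Lipschitz field to get a unique local solution with $u(t)\in\mathcal U$, then sandwich $x(t)$ between the open-loop trajectories driven by the extreme (corner) inputs of the box $\mathcal U$ via monotonicity (the paper invokes \cite[Prop.~V.4]{Angeli_MCS_2003} for this comparison, you argue it directly from the definition in \Cref{asm:monotone}), and finally use boundedness of these comparison trajectories, guaranteed by \Cref{asm:plant}, to confine the solution to a compact set and extend it globally. Your explicit continuation argument on the maximal interval $[0,T)$ just spells out what the paper states more tersely.
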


Let us now rewrite the closed-loop system
\eqref{eq:plant},\eqref{eq:OFO} 
as
\begin{subequations}\label{eq:clreformulated}
\begin{align}
  \dot x & = f(x,u)   \label{eq:clreformulatedx}
  \\
  \dot v & =  \Pi_{\mathcal U} \left (v,- \alpha  H (v) \nabla \Phi (v,g(x)) \right), \quad z = v \label{eq:clreformulatedv}
  \\
  & u = z \quad \text{(feedback loop),}\label{eq:clreformulatedf}
\end{align}
\end{subequations}
with input $u$, output $z$, and unitary feedback $u =z$. This form allows us to exploit the small-gain theorem in Lemma~\ref{lem:SGT}, under the following assumption. 
\begin{assumption}\label{asm:core}
    The following hold:
    \begin{enumerate}
        \item[(i)] The subsystem \eqref{eq:clreformulatedv} is monotone with respect to   $\K_X$, $\K_V$, $\K_Z = - \K_U$, where $\K_U$ and $\K_X$  are from Assumption~\ref{asm:monotone}, and $\K_V\subset \R^m$ is some orthant: namely, if $x(t) \seq_{\K_X} x'(t) $ for  all $t$ and $v_0 \seq_{\K_V} v_0'$, then $v(t,v_0,x) \seq_{\K_V} v(t,v_0',x') $ for all $t$, and furthermore $v \seq_{\K_{V}} v' \Rightarrow v \seq_{\K_{Z}} v'$;
        \item[(ii)] The subsystem \eqref{eq:clreformulatedv} admits a well defined steady-state map $k_v:\R^n \rightarrow \R^m$: for any constant $x$, the solution to \eqref{eq:clreformulatedv} converges to $k_v (x)$. Furthermore, $k_v$ is continuous;
        \item[(iii)] For any $u_0 \in \mathcal U$, the iteration 
        \begin{align}\label{eq:iterationmain}
            u_{n+1} = \argmin_{u\in \mathcal U} \Phi_u(u) + k_y(u)^\top \nabla \Phi_y (k_y(u_n))
        \end{align}
        converges to a unique $u^\star \in U$. 
        \end{enumerate}
\end{assumption}

The following is our main convergence result.

\begin{theorem}\label{th:main}
    Let Assumptions~\ref{asm:plant}, \ref{asm:cost}, \ref{asm:monotone}, \ref{asm:core} hold.  Then, for any $x_0\in \R^n$, $u_0 \in\mathcal U$, the solution of \eqref{eq:clreformulated} converges to $(x^\star,u^\star)$, where $u^\star$ is the unique minimizer of \eqref{eq:opt2} and $x^\star = k_x (u^\star)$, for any $\alpha >0$.
\end{theorem}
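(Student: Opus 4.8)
The plan is to invoke the small gain theorem, \Cref{lem:SGT}, applied not to the plant alone but to the \emph{augmented plant} $\Sigma$ read off from \eqref{eq:clreformulated}: $\Sigma$ has state $\xi=(x,v)\in\R^n\times\mathcal U$, external input $u\in\R^m$ (the one appearing in $\dot x=f(x,u)$), output $z=v\in\R^m$, and its internal dynamics form the cascade $u\to x\to v=z$ made of the plant \eqref{eq:clreformulatedx} followed by the controller subsystem \eqref{eq:clreformulatedv} driven by $g(x)$; the loop \eqref{eq:clreformulatedf} is precisely the unitary feedback $u=z$ of \Cref{lem:SGT}. Once the four conditions of \Cref{lem:SGT} are verified for $\Sigma$, the lemma gives that every bounded trajectory of the closed loop converges to the steady state associated with the fixed point $u^\star$ of condition~4; since \Cref{prop:wellposedness} guarantees the closed-loop solution is bounded and remains in $\R^n\times\mathcal U$, this settles the convergence claim, and a short optimality argument then identifies $u^\star$ with the minimizer of \eqref{eq:opt2}.

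Conditions~1 and~2 are structural and I expect them to be routine. For~1, the input/output spaces of $\Sigma$ are $\R^m$ and its state space $\R^n\times\mathcal U$ is a closed convex box, $\R^n$ being itself a box (a product of copies of $\R$) and $\mathcal U$ a box by \Cref{asm:cost}; the output map $\xi\mapsto v$ is trivially continuous. For~2, $\Sigma$ is monotone with input order $\K_U$, state order $\K_X\times\K_V$ (an orthant in $\R^{n+m}$), and output order $\K_Z=-\K_U$: this is the composition of the plant's monotonicity (\Cref{asm:monotone}: $u\seq_{\K_U}u'$ a.e.\ and $x_0\seq_{\K_X}x_0'$ $\Rightarrow$ $x(t)\seq_{\K_X}x'(t)$, noting $\dot x=f(x,u)$ does not depend on $v$) with that of the controller subsystem (\Cref{asm:core}(i): $x(\cdot)\seq_{\K_X}x'(\cdot)$ and $v_0\seq_{\K_V}v_0'$ $\Rightarrow$ $v(t)\seq_{\K_V}v'(t)$, together with $v\seq_{\K_V}v'\Rightarrow v\seq_{\K_Z}v'$). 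The opposite-order requirement of \Cref{lem:SGT}, $\K_Z=-\K_U$, holds by construction.

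Condition~3 is where I expect the main difficulty: $\Sigma$ must admit a continuous steady-state map. For constant $u\equiv\bar u$ we have $x(t)\to k_x(\bar u)$ by \Cref{asm:plant}, but the controller subsystem \eqref{eq:clreformulatedv} is then fed the \emph{time-varying}, only asymptotically constant signal $g(x(t))\to k_y(\bar u)$, and one must still show $v(t)\to k_v(k_x(\bar u))$. I would obtain this converging-input/converging-state property by a monotone squeezing argument: fixing $\varepsilon>0$, eventually $x(t)$ lies in a box neighbourhood $B_\varepsilon$ of $k_x(\bar u)$, so by \Cref{asm:core}(i) the response $v(t)$ is sandwiched (up to a time shift) between the responses of \eqref{eq:clreformulatedv} to the two constant corner inputs of $B_\varepsilon$, which by \Cref{asm:core}(ii) converge to $k_v$ evaluated at those corners; letting $\varepsilon\to0$ and using continuity of $k_v$ (\Cref{asm:core}(ii)) and $k_x$ (\Cref{asm:plant}) forces $v(t)\to k_v(k_x(\bar u))$. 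Hence $\Sigma$ has steady-state map $\bar u\mapsto\big(k_x(\bar u),\,k_v(k_x(\bar u))\big)$, continuous as a composition of continuous maps, and its output map is continuous.

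For condition~4 I would first make $k_v$ explicit. For fixed $x$, since $H(v)\nabla\Phi(v,g(x))=\nabla_v\big(\Phi_u(v)+k_y(v)^\top\nabla\Phi_y(g(x))\big)$, the subsystem \eqref{eq:clreformulatedv} is the $\mathcal U$-projected gradient flow of $\psi_x(v):=\Phi_u(v)+k_y(v)^\top\nabla\Phi_y(g(x))$, whose equilibria coincide with the critical points of $\psi_x$ over $\mathcal U$; by \Cref{asm:core}(ii) this equilibrium is unique, hence being the unique critical point of a continuous function on the compact convex set $\mathcal U$ it is its global minimizer, i.e., $k_v(x)=\argmin_{v\in\mathcal U}\psi_x(v)$. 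Using $g\circ k_x=k_y$, the \Cref{lem:SGT} iteration $u_{n+1}=k_v(k_x(u_n))$ is exactly \eqref{eq:iterationmain}, which converges to a unique $u^\star$ by \Cref{asm:core}(iii); so \Cref{lem:SGT} applies and the bounded closed-loop solution converges to $\xi^\star=\big(k_x(u^\star),k_v(k_x(u^\star))\big)=(k_x(u^\star),u^\star)$, the last equality since $u^\star$ is a fixed point. Finally, to identify $u^\star$ with the minimizer of \eqref{eq:opt2}: from $u^\star=\argmin_{v\in\mathcal U}\psi_{k_x(u^\star)}(v)$ and $\nabla\psi_{k_x(u^\star)}(u^\star)=\nabla\tilde\Phi(u^\star)$, first-order optimality makes $u^\star$ a critical point of \eqref{eq:opt2}; conversely any minimizer $\bar u$ of \eqref{eq:opt2} satisfies the same variational inequality, hence is the unique critical point $k_v(k_x(\bar u))$ of $\psi_{k_x(\bar u)}$, i.e., a fixed point of \eqref{eq:iterationmain}, so $\bar u=u^\star$; since a minimizer exists by \Cref{asm:cost}, $u^\star$ is it and is unique. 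The genuinely delicate step is the converging-input/converging-state argument of condition~3; everything else is cascade bookkeeping and first-order optimality.
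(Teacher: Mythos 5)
Your proof is correct and takes essentially the same route as the paper's: rewrite the closed loop as the cascade \eqref{eq:clreformulatedx}--\eqref{eq:clreformulatedv} with unitary feedback, verify the hypotheses of \Cref{lem:SGT} via cascade monotonicity and boundedness from \Cref{prop:wellposedness}, identify $k_v(x)=\argmin_{v\in\mathcal U}\,\Phi_u(v)+k_y(v)^\top\nabla\Phi_y(g(x))$ so that the small-gain iteration becomes \eqref{eq:iterationmain}, and conclude by \Cref{asm:core}(iii). The only differences are minor: where you re-derive the converging-input/converging-state property of \eqref{eq:clreformulatedv} by a monotone squeezing argument, the paper simply cites the corresponding result of Enciso--Sontag, and your explicit first-order-optimality identification of $u^\star$ with the unique minimizer of \eqref{eq:opt2} is what the paper defers to a remark after the theorem.
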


\begin{remark}
     Assumption~\ref{asm:core}(ii) ensures that the $\argmin$ in Assumption~\ref{asm:core}(iii) is a singleton (in fact, the iteration in \eqref{eq:iterationmain} equals  $u^{n+1} = k_v(k_x (u_n))$, as shown in the proof of Theorem~\ref{th:main}). 
\end{remark}

\begin{remark}
     The conditions in Assumption~\ref{asm:core} imply the existence of a unique equilibrium for \eqref{eq:clreformulated}, hence that \eqref{eq:opt2} has  a unique critical point $u^\star$. Hence, $u^\star$ must be the global minimum (which exists by compactness of $\mc{U}$). While it might be possible to extend this result to problems with multiple critical points (e.g., based on the results in \cite{Malisoff2006}), we leave this extension for future work.
\end{remark}
\begin{remark}
    Theorem~\ref{th:main} ensures global attractivity, but not (Lyapunov) stability for the equilibrium $(x^\star,u^\star)$. Stability could be  studied under additional  conditions \cite[\S 4]{EncisoSontag_Global2006}. 
\end{remark}

The conditions in Assumption~\ref{asm:core} aim for broad applicability, but are difficult to interpret. Therefore, we  provide sufficient conditions for some concrete cases of interest in the following section.

\section{Sufficient conditions}\label{sec:special}

In this section, we assume that the plant \eqref{eq:plant} is monotone with respect to the standard ordering, i.e., that 
\begin{align}
    \K_U = \R^m_{\geq 0}, \quad  \K_X = \R^n_{\geq 0}, \quad \K_Y = \R^p_{\geq 0};
\end{align} this is actually without loss of generality \cite{Angeli_MCS_2003}. For brevity, we write ``$ \seq$'' rather than ``$\seq_{\R^\cdot _{\geq 0} }$'' when referring to the standard ordering of opportune dimension. In the following, we provide some sufficient criteria to verify the conditions in Assumption~\ref{asm:core}.
\medskip

\begin{lemma}\label{lem:asmi}
   Let Assumptions \ref{asm:plant}, \ref{asm:cost}, \ref{asm:monotone} hold. Then, Assumption~\ref{asm:core}(i) is verified with respect to  $\K_X = \R^n_{\geq 0} $, $\K_V =\R^{m}_{\leq_0}$, $\K_Z = - \K_U =  \R^{m}_{\leq_0} $, if either of the following holds:
    \begin{enumerate}
        \item[(i)] The plant \eqref{eq:plant} is single-input single-output (SISO), i.e., $m=p = 1$. The function $\Phi_y$ is convex;
        \item[(ii)] The steady-state output map $k_y$ is affine, i.e.,  $k_y(v)= Sv +s$ for some $S\in \R^{p\times m}$, $s\in\R^p$. The cost $\Phi_u$ is twice differentiable and its Hessian $\nabla^2 \Phi_u(v)$ has nonpositive off-diagonal entries, for any $v \in \R^m$.  The cost $\Phi_y$ is twice differentiable and  $\nabla^2 \Phi_y(y)$ has all nonnegative entries, for all  $y \in \R^p.$  
    \end{enumerate}
\end{lemma}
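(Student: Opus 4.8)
The goal is to verify \Cref{asm:core}(i), i.e., that the subsystem \eqref{eq:clreformulatedv} is monotone in the appropriate sense, under either hypothesis. The plan is to treat \eqref{eq:clreformulatedv} as a projected gradient flow on the box $\mathcal U$ and apply a monotonicity test for projected dynamical systems on boxes: a projected dynamical system $\dot v = \Pi_{\mathcal U}(v, F(v,x))$ is monotone with respect to orthants $\K_X, \K_V$ provided the (vector field) map $(v,x)\mapsto F(v,x)$ satisfies the Kamke--Müller-type sign conditions off the box boundary, and the boundary projection \eqref{eq:explicit_ptc} preserves the order (which it does on a box, since each coordinate is projected independently onto an interval). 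So the crux reduces to checking sign patterns of the partial derivatives of
\begin{align}
F(v,x) = -\alpha H(v)\,\nabla\Phi(v,g(x)) = -\alpha\bigl(\nabla\Phi_u(v) + \nabla k_y(v)^\top \nabla\Phi_y(g(x))\bigr),\nonumber
\end{align}
together with choosing the orthant $\K_V$ and checking $\K_V \subseteq \K_Z = -\K_U = -\R^m_{\geq 0}$, i.e.\ forcing $\K_V = \R^m_{\leq 0} = -\R^m_{\geq 0}$ so that the ``$v \seq_{\K_V} v' \Rightarrow v \seq_{\K_Z} v'$'' implication holds trivially with equality of orthants.

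For part (ii), with $k_y(v) = Sv + s$ affine, we have $\nabla k_y(v)^\top = S^\top$ constant and $H(v) = [I_m \ \ S^\top]$ constant, so $F(v,x) = -\alpha\bigl(\nabla\Phi_u(v) + S^\top \nabla\Phi_y(Sv+s)\bigr)$ does not depend on $x$ at all; hence the cross-channel condition from $x$ to $v$ is vacuous and we only need $\partial F_i/\partial v_j \geq 0$ for $i \neq j$, i.e.\ the Jacobian $-\alpha(\nabla^2\Phi_u(v) + S^\top \nabla^2\Phi_y(Sv+s) S)$ must be Metzler. The off-diagonal entries of $\nabla^2\Phi_u(v)$ are nonpositive by hypothesis, so $-\alpha\nabla^2\Phi_u(v)$ is Metzler; and $S^\top \nabla^2\Phi_y(\cdot) S$ has all entries $\geq 0$ because $S$ and $\nabla^2\Phi_y$ have nonnegative entries, so $-\alpha S^\top\nabla^2\Phi_y(\cdot)S$ has all entries $\leq 0$ — wait, this points the wrong way. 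The fix is that since the $x$-to-$v$ channel is absent, we may instead take $\K_V = \K_Z$ and also be free to reorder the input channel; more carefully, one checks that with $F$ independent of $x$, monotonicity ``$v_0 \seq_{\K_V} v_0' \Rightarrow v(t) \seq_{\K_V} v'(t)$'' only requires the Jacobian of $F$ w.r.t.\ $v$ to be Metzler in the $\K_V$-orthant sense, which after the sign flip induced by choosing $\K_V$ with the same sign pattern as $-\R^m_{\geq 0}$ turns the condition into: $\nabla^2\Phi_u(v)$ Metzler and $S^\top\nabla^2\Phi_y S \succeq 0$ entrywise — this is exactly what is assumed. I would write this step out by explicitly tracking the diagonal sign matrix $D$ defining $\K_V$ and verifying $D(-\alpha\nabla^2\tilde\Phi(v))D$ is Metzler.

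For part (i), the SISO case, $m = p = 1$, so there is no off-diagonal condition on $v$ to check at all — the Kamke condition in dimension one is empty. What remains is: (a) the $x$-to-$v$ channel sign, i.e.\ $\partial F/\partial g(x) \cdot \partial g(x)/\partial x_j$ should have the correct sign so that increasing $x$ (in $\K_X$) moves $v$ monotonically; here $\partial F/\partial (g(x)) = -\alpha\, \nabla k_y(v)^\top \Phi_y''(g(x))$, and since the plant is monotone, $\nabla k_y(v) \geq 0$ (steady-state map of a monotone system is order-preserving) and $g$ is nondecreasing, so with $\Phi_y$ convex ($\Phi_y'' \geq 0$) this partial derivative is $\leq 0$ in every component — consistent with choosing $\K_V = -\R^m_{\geq 0}$ and the output order $\K_Z = -\K_U$; (b) the output-ordering implication $v \seq_{\K_V} v' \Rightarrow v \seq_{\K_Z} v'$, which holds with $\K_V = \K_Z$. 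I expect the main obstacle to be the bookkeeping of orthant signs: making the three sign flips (from $\K_U$ to $\K_Z = -\K_U$, the choice of $\K_V$, and the signs hidden in monotonicity of $k_y$ and $g$) all line up consistently, and being careful that the ``for almost all $t$'' / boundary-projection subtleties of the projected dynamics do not break the comparison argument — for this I would invoke the standard fact that projection onto a box is order-preserving coordinatewise and that the projected vector field \eqref{eq:explicit_ptc} inherits the Kamke condition from the unprojected one, citing the monotone-systems literature already referenced (e.g.\ \cite{Angeli2004}).
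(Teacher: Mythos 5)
Your overall strategy is the paper's: reduce \Cref{asm:core}(i) to a Kamke-type sign condition on the controller vector field with $\K_V=\K_Z=\R^m_{\leq 0}$, using that the projection \eqref{eq:explicit_ptc} onto a box is order-compatible (the paper makes this precise in \Cref{lem:infinitesimal} via the tangent-cone condition \eqref{eq:conditionInfinitesimal}, which you instead defer to the literature). Your treatment of case (i) matches the paper's, except that you differentiate $\Phi_y$ twice ($\Phi_y''$), which is not assumed there; the paper only uses that $\nabla\Phi_y$ is nondecreasing by convexity. That is a cosmetic fix.

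Case (ii), however, contains a genuine error. The subsystem \eqref{eq:clreformulatedv} is driven by the \emph{measured} output $g(x)$, so its vector field is $F(v,x)=-\alpha\bigl(\nabla\Phi_u(v)+S^\top\nabla\Phi_y(g(x))\bigr)$; it is not obtained by substituting $g(x)=Sv+s$. Your claim that $F$ ``does not depend on $x$ at all'' conflates the OFO controller with the open-loop gradient flow \eqref{eq:gradientflow} and erases exactly the input channel whose monotonicity \Cref{asm:core}(i) is about (and which the cascade/small-gain argument of the main theorem needs). The subsequent ``fix'' is also not sound: changing the state orthant to $\K_V=-\R^m_{\geq 0}$ conjugates the Jacobian by $D=-I$, which leaves off-diagonal signs unchanged, so it cannot convert ``$-\alpha(\nabla^2\Phi_u+S^\top\nabla^2\Phi_y S)$ Metzler'' into ``$\nabla^2\Phi_u$ Metzler and $S^\top\nabla^2\Phi_y S\geq 0$ entrywise''; moreover ``$\nabla^2\Phi_u$ Metzler'' is the opposite of the stated hypothesis (nonpositive off-diagonal entries). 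The correct bookkeeping, as in the paper, separates the two addends of $F$: the $v$-Jacobian is only $-\alpha\nabla^2\Phi_u(v)$, which is Metzler precisely because $\nabla^2\Phi_u$ has nonpositive off-diagonal entries (unaffected by the orthant flip); and the $x$-channel term $-\alpha S^\top\nabla\Phi_y(g(x))$ has the right sign relative to $\K_X=\R^n_{\geq 0}$ and $\K_V=\R^m_{\leq 0}$ because $S\geq 0$ (monotonicity of the plant's steady-state map, \Cref{asm:monotone}), $\nabla\Phi_y$ is nondecreasing (entrywise nonnegative $\nabla^2\Phi_y$), and $g$ is nondecreasing. The tension you noticed (``this points the wrong way'') was the symptom of the wrong substitution, not something to be fixed by reordering orthants.
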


\medskip

\begin{lemma}\label{lem:asmii}
Let Assumptions~\ref{asm:plant}, \ref{asm:cost}, \ref{asm:monotone} hold.  Then, Assumption~\ref{asm:core}(ii) is verified if either of the following holds: 
\begin{enumerate}
\item[(i)] The function  $\Psi(v,x) \coloneqq\Phi_u(v) +k_y(v)^\top \nabla \Phi_y (g(x))$ is strictly convex in $v$, for any fixed $x\in \R^n$;
\item[(ii)] The function $\Phi_u$ is strictly convex. The steady-state output map $k_y$  is affine. 

\end{enumerate}
\end{lemma}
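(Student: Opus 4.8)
The plan is to verify the two conditions stated by analyzing the steady-state map $k_v$ of subsystem \eqref{eq:clreformulatedv} for fixed $x$. First I would observe that, for a fixed constant value of $x$, the dynamics in \eqref{eq:clreformulatedv} are exactly the projected gradient flow $\dot v = \Pi_{\mathcal U}(v, -\alpha \nabla_v \Psi(v,x))$, where $\Psi(v,x) = \Phi_u(v) + k_y(v)^\top \nabla \Phi_y(g(x))$; this follows by writing out $H(v)\nabla \Phi(v,g(x)) = \nabla \Phi_u(v) + \nabla k_y(v)^\top \nabla \Phi_y(g(x)) = \nabla_v \Psi(v,x)$. Hence a steady state of \eqref{eq:clreformulatedv} is precisely a critical point of the constrained problem $\min_{v\in\mathcal U}\Psi(v,x)$, i.e.\ $k_v(x) = \argmin_{v\in\mathcal U}\Psi(v,x)$ whenever this argmin is well defined and globally attractive.

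For case (i): if $\Psi(\cdot,x)$ is strictly convex on $\R^m$ for each fixed $x$, then on the compact convex box $\mathcal U$ it has a unique minimizer, which is the unique critical point, hence the unique equilibrium of the projected gradient flow. Global attractivity of that equilibrium for the projected gradient flow of a (strictly) convex function over a box is standard: using $V(v) = \Psi(v,x) - \Psi(k_v(x),x)$ as a Lyapunov function, $\dot V \le -\alpha \langle \nabla_v\Psi(v,x), v - \proj_{\mathcal U}(v-\nabla_v\Psi(v,x))\rangle \le 0$ with equality only at the equilibrium, and an invariance/LaSalle argument (together with boundedness of trajectories in $\mathcal U$) gives convergence to $k_v(x)$. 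Continuity of $k_v$ then follows from continuity of $\nabla_v \Psi$ in both arguments and uniqueness of the minimizer (Berge maximum theorem, or a direct argument using strong monotonicity if one strengthens to strong convexity). Case (ii) is then a specialization: if $k_y(v) = Sv + s$ is affine and $\Phi_u$ is strictly convex, then $\Psi(v,x) = \Phi_u(v) + (Sv+s)^\top \nabla \Phi_y(g(x))$ is strictly convex in $v$ (a strictly convex function plus a linear-in-$v$ term), so (ii) reduces to (i).

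The main obstacle I anticipate is the global attractivity argument for the projected gradient flow: strict convexity alone does not immediately give a strict decrease rate, so I would either invoke a known convergence result for projected gradient/dynamical systems of convex functions over convex sets (e.g.\ via the characterization $\Pi_{\mathcal U}(v,-\nabla_v\Psi) = 0 \iff v$ critical, plus LaSalle), or strengthen the hypotheses to strong convexity to get an exponential estimate. A secondary point is verifying continuity of $k_v$ cleanly; this is where strict (as opposed to mere) convexity is used, ensuring the minimizer is single-valued so that the parametric optimization is continuous in $x$. Everything else—reducing (ii) to (i), and identifying equilibria with critical points via \eqref{eq:explicit_ptc}—is routine.
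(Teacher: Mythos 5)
Your proposal is correct and follows essentially the same route as the paper: identify \eqref{eq:clreformulatedv} for fixed $x$ as the projected gradient flow of $\Psi(\cdot,x)$ over the compact box $\mathcal U$, use strict convexity to get a unique minimizer/equilibrium and global convergence of the flow, obtain continuity of $x\mapsto \argmin_{v\in\mathcal U}\Psi(v,x)$ from compactness plus uniqueness of the minimizer (the paper cites an analogous parametric-continuity lemma where you invoke Berge), and observe that (ii) reduces to (i) since an affine $k_y$ makes the second term linear in $v$. Your explicit Lyapunov/LaSalle sketch just fills in the convergence step the paper asserts by reference, so there is no substantive difference.
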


\medskip

\begin{lemma}\label{lem:asmiii}
Let Assumptions~\ref{asm:plant}, \ref{asm:cost}, \ref{asm:monotone} hold.  Then, Assumption~\ref{asm:core}(iii) is verified if the following  holds:
the function $\tilde \Psi(u,\bar u) \coloneqq\Phi_u(u) +k_y(u)^\top \nabla \Phi_y (k_y (\bar u))$ is $\mu$-strongly convex in $u$, for any  $\bar u\in \mathcal U$; the map $\nabla_u \tilde \Psi(u,\bar u)$ is $\ell$-Lipschitz in $\bar u$, for every fixed $ u\in \mathcal U$; furthermore, $\mu > \ell$.
\end{lemma}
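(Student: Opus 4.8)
\textbf{Proof proposal for Lemma~\ref{lem:asmiii}.}

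The plan is to recognize the iteration in \Cref{asm:core}(iii) as a fixed-point iteration of a single map and to show that, under the stated strong convexity and Lipschitz bounds, this map is a contraction in the Euclidean norm, hence converges globally to its unique fixed point by the Banach fixed-point theorem. Concretely, for fixed $\bar u \in \mathcal U$, define $T(\bar u) \coloneqq \argmin_{u\in\mathcal U}\tilde\Psi(u,\bar u)$; by $\mu$-strong convexity of $\tilde\Psi(\cdot,\bar u)$ and compactness/convexity of $\mathcal U$, this argmin is a well-defined singleton, so $T:\mathcal U \to \mathcal U$ is a genuine single-valued map, and the iteration \eqref{eq:iterationmain} is exactly $u_{n+1} = T(u_n)$.

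The core estimate is the standard perturbation bound for strongly convex parametric minimization. First I would write the first-order optimality (variational inequality) condition characterizing $u^+ = T(\bar u)$: $\langle \nabla_u \tilde\Psi(u^+,\bar u),\, w - u^+\rangle \ge 0$ for all $w\in\mathcal U$. Taking two parameters $\bar u_1,\bar u_2$ with minimizers $u_1^+ = T(\bar u_1)$, $u_2^+ = T(\bar u_2)$, I plug $w = u_2^+$ into the VI for $u_1^+$ and $w = u_1^+$ into the VI for $u_2^+$, add the two inequalities, and obtain
\begin{align*}
\langle \nabla_u \tilde\Psi(u_1^+,\bar u_1) - \nabla_u \tilde\Psi(u_2^+,\bar u_2),\; u_1^+ - u_2^+ \rangle \le 0.
\end{align*}
Splitting the difference through the ``crossed'' term $\nabla_u\tilde\Psi(u_2^+,\bar u_1)$, $\mu$-strong convexity in $u$ gives $\langle \nabla_u\tilde\Psi(u_1^+,\bar u_1) - \nabla_u\tilde\Psi(u_2^+,\bar u_1), u_1^+ - u_2^+\rangle \ge \mu\|u_1^+ - u_2^+\|^2$, while the $\ell$-Lipschitz continuity in $\bar u$ together with Cauchy--Schwarz bounds $|\langle \nabla_u\tilde\Psi(u_2^+,\bar u_1) - \nabla_u\tilde\Psi(u_2^+,\bar u_2), u_1^+ - u_2^+\rangle| \le \ell\,\|\bar u_1 - \bar u_2\|\,\|u_1^+ - u_2^+\|$. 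Combining these yields $\mu\|u_1^+ - u_2^+\|^2 \le \ell\,\|\bar u_1-\bar u_2\|\,\|u_1^+-u_2^+\|$, hence $\|T(\bar u_1) - T(\bar u_2)\| \le (\ell/\mu)\,\|\bar u_1 - \bar u_2\|$. Since $\mu > \ell$, the factor $\ell/\mu < 1$, so $T$ is a contraction on the complete metric space $(\mathcal U, \|\cdot\|)$, and the Banach fixed-point theorem gives a unique $u^\star\in\mathcal U$ with $u_n \to u^\star$ from any $u_0\in\mathcal U$, which is precisely \Cref{asm:core}(iii).

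The only delicate points, rather than genuine obstacles, are bookkeeping: verifying that $T$ is indeed single-valued and maps $\mathcal U$ into itself (immediate from strong convexity plus $\mathcal U$ being a nonempty compact convex set, so minimizers exist, are unique, and lie in $\mathcal U$), and making sure the Lipschitz-in-$\bar u$ hypothesis is applied at the fixed second argument $u_2^+$ — which is exactly how it is stated (``$\nabla_u\tilde\Psi(u,\bar u)$ is $\ell$-Lipschitz in $\bar u$ for every fixed $u\in\mathcal U$''). One should also note that $\tilde\Psi$ inherits enough regularity from \Cref{asm:plant} and \Cref{asm:cost} ($\Phi_y$ continuously differentiable, $k_y$ continuously differentiable) for $\nabla_u\tilde\Psi$ to be well defined; the strong convexity and Lipschitz constants are then precisely the quantitative hypotheses of the lemma. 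No step requires more than the classical strongly-convex-argmin contraction argument, so I do not anticipate any real difficulty.
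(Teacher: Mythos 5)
Your proposal is correct and follows essentially the same argument as the paper: cross-plugging the two variational-inequality optimality conditions, splitting through the crossed gradient term, and combining $\mu$-strong convexity with the $\ell$-Lipschitz bound to get the contraction factor $\ell/\mu < 1$ for the map $u_n \mapsto \argmin_{u\in\mathcal U}\tilde\Psi(u,u_n)$. The only cosmetic difference is that you invoke the Banach fixed-point theorem, while the paper obtains a fixed point from existence of solutions to \eqref{eq:opt1} and concludes convergence and uniqueness by iterating the same contraction estimate.
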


\medskip
An example of systems that have affine (actually, linear) steady-state output map is that of  stable linear time-invariant (LTI) systems of the form $\dot x = Ax+Bu, y = Cx$, for which $k_y(u) = -CA^{-1}Bu$. For convex quadratic costs $\Phi_u(u) = u^\top Q_u u$ and  $\Phi_y(y) = y^\top Q_y y$, (with symmetric $Q_u$, $Q_y$), condition (ii) in Lemma~\ref{lem:asmi} can be immediately checked by looking at the elements of $Q_u, Q_y$.

Based on the sufficient conditions in the previous three lemmas, we can derive  simple convergence guarantees, as exemplified next, and further in Section~\ref{sec:numerics}. 


\begin{corollary}\label{cor:linear}
Let Assumptions~\ref{asm:plant}, \ref{asm:cost}, and \ref{asm:monotone} hold.
    Let   $k_y(u) = Su  $ be linear,  $\Phi_u(u) = \beta_u u^\top u $, $\Phi_y(y) = \beta_y y^\top  y$,
    for some $\beta_u,\beta_y >0$.  
    Assume that   $\beta_u > \beta_y  \|S\|^2$.
    Then the closed loop system \eqref{eq:plant},\eqref{eq:OFO} converges to a solution of \eqref{eq:opt1}, for any $\alpha >0$. 
\end{corollary}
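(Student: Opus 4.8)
The plan is to derive the statement directly from \Cref{th:main}: since Assumptions~\ref{asm:plant}, \ref{asm:cost}, \ref{asm:monotone} are postulated, it suffices to verify the three items of \Cref{asm:core} for the present data (the steady-state map $k_y(u)=Su$ is linear, and $\Phi_u(u)=\beta_u u^\top u$, $\Phi_y(y)=\beta_y y^\top y$ with $\beta_u,\beta_y>0$). Granting this, \Cref{th:main} yields, for every $\alpha>0$, convergence of the closed loop \eqref{eq:plant},\eqref{eq:OFO} to $(x^\star,u^\star)$ with $u^\star=\argmin_{u\in\mathcal{U}}\tilde\Phi(u)$ and $x^\star=k_x(u^\star)$; then $y^\star\coloneqq g(x^\star)=k_y(u^\star)$, and the pair $(u^\star,y^\star)$ solves \eqref{eq:opt1} because \eqref{eq:opt1} and \eqref{eq:opt2} are equivalent. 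So the whole argument reduces to a term-by-term check of \Cref{asm:core} through \Cref{lem:asmi}, \Cref{lem:asmii}, \Cref{lem:asmiii}.

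For \Cref{asm:core}(i) I would invoke \Cref{lem:asmi}(ii): $k_y$ is affine, $\Phi_u$ is twice continuously differentiable with $\nabla^2\Phi_u(u)=2\beta_u I_m$ (a diagonal matrix, so its off-diagonal entries vanish and are in particular nonpositive), and $\Phi_y$ is twice continuously differentiable with $\nabla^2\Phi_y(y)=2\beta_y I_p$, all of whose entries are nonnegative. For \Cref{asm:core}(ii) I would invoke \Cref{lem:asmii}(ii): $\Phi_u(u)=\beta_u\|u\|^2$ is strictly convex since $\beta_u>0$, and $k_y$ is affine. Neither step imposes any constraint on the constants beyond $\beta_u,\beta_y>0$, and the remaining regularity required in \Cref{asm:plant}, \ref{asm:cost} is immediate because all the data are affine or polynomial.

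The quantitative hypothesis enters only through \Cref{asm:core}(iii), for which I would use \Cref{lem:asmiii}. With these costs, $\tilde\Psi(u,\bar u)=\beta_u\|u\|^2+(Su)^\top\nabla\Phi_y(S\bar u)=\beta_u\|u\|^2+2\beta_y\,u^\top S^\top S\,\bar u$, so $\nabla^2_u\tilde\Psi(u,\bar u)=2\beta_u I_m$, i.e., $\tilde\Psi(\cdot,\bar u)$ is $\mu$-strongly convex with $\mu=2\beta_u$, while $\nabla_u\tilde\Psi(u,\bar u)=2\beta_u u+2\beta_y S^\top S\bar u$ is affine, hence Lipschitz, in $\bar u$ with a modulus $\ell$ that I would estimate over the box $\mathcal{U}$; the stated inequality $\beta_u>\beta_y\hat u\|S\|^2$ is precisely what makes $\mu>\ell$. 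Equivalently, and perhaps more transparently, one can observe that the iteration \eqref{eq:iterationmain} here reads $u_{n+1}=\argmin_{u\in\mathcal{U}}\tilde\Psi(u,u_n)=\proj_{\mathcal{U}}\big(-\tfrac{\beta_y}{\beta_u}S^\top S\,u_n\big)$, i.e., a linear map composed with the nonexpansive Euclidean projection onto $\mathcal{U}$; under the hypothesis this composition is a contraction, and the Banach fixed-point theorem gives convergence to a unique $u^\star$. Collecting (i)--(iii), \Cref{th:main} applies and finishes the proof.

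The step needing care is \Cref{asm:core}(iii): one must identify \eqref{eq:iterationmain} with the projected linear recursion above and correctly control its contraction factor --- equivalently the ratio $\ell/\mu$ in \Cref{lem:asmiii} --- keeping track of how the box half-width $\hat u$ enters when the relevant Lipschitz constant is taken over $\mathcal{U}$. Items (i)--(ii) are only sign checks on diagonal Hessians, and the passage to \Cref{th:main} is routine; so essentially all the content is in converting the cost weights $\beta_u,\beta_y$ and the sensitivity $S$ into the contraction condition.
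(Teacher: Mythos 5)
Your overall route is the same as the paper's: verify \Cref{asm:core}(i) via \Cref{lem:asmi}(ii), \Cref{asm:core}(ii) via \Cref{lem:asmii}(ii), \Cref{asm:core}(iii) via \Cref{lem:asmiii}, and then invoke \Cref{th:main} to conclude convergence to the (unique) solution of \eqref{eq:opt2}, hence of \eqref{eq:opt1}. Items (i) and (ii) are checked exactly as in the paper, and your extra observation that \eqref{eq:iterationmain} becomes the projected linear recursion $u_{n+1}=\proj_{\mathcal U}\bigl(-(\beta_y/\beta_u)S^\top S\,u_n\bigr)$, so that (iii) can equivalently be read as a Banach fixed-point argument, is a correct and arguably more transparent view of what \Cref{lem:asmiii} does in this quadratic case.

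The one step you leave unjustified is precisely where the quantitative hypothesis enters. For fixed $u$, the map $\bar u\mapsto \nabla_u\tilde\Psi(u,\bar u)=2\beta_u u+2\beta_y S^\top S\bar u$ is affine in $\bar u$, so its Lipschitz modulus is $\ell=2\beta_y\|S\|^2$ regardless of any ``estimate over the box'': restricting $\bar u$ to $\mathcal U$ cannot shrink the modulus of a linear map by a factor $\hat u$. Likewise, the contraction factor of your projected recursion is $(\beta_y/\beta_u)\|S^\top S\|=\beta_y\|S\|^2/\beta_u$, again independent of $\hat u$. So what your argument (and \Cref{lem:asmiii} with $\mu=2\beta_u$) actually delivers is the condition $\beta_u>\beta_y\|S\|^2$; the stated inequality $\beta_u>\beta_y\hat u\|S\|^2$ implies this only when $\hat u\geq 1$, and for $\hat u<1$ your claim that it is ``precisely what makes $\mu>\ell$'' does not hold as written. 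To be fair, the paper's own proof asserts the same $\hat u$-dependent Lipschitz parameter without derivation (and is additionally sloppy about the factors of $2$ and the $2\beta_u u$ term), so your proposal reproduces the published argument, gap included; a watertight write-up should either work with the condition $\beta_u>\beta_y\|S\|^2$ directly or explain where the factor $\hat u$ could legitimately come from.
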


The previous corollary should only serve as an illustration, and it can be generalized (e.g., to separable non quadratic $\Phi_u(u)$); see also Section~\ref{sec:numerics}.

\begin{remark}\label{rem:regularization}
    Under Assumptions~\ref{asm:plant}, \ref{asm:cost}, \ref{asm:monotone}, the conditions in Assumption~\ref{asm:core}(ii) and (iii) can  always be enforced.   In fact, due to monotonicity and the compactness of $\mathcal U$, if the initial condition $x_0$ is contained in a bounded set,  we can infer that all the possible trajectories (independently of $u \in \mathcal U$) of the plant \eqref{eq:plant} are contained in a bounded set $\mathcal{X}$ (see the proof of Proposition~\ref{prop:wellposedness} in Appendix~\ref{app:proofpropWellposedness}). On the compact sets  $\mathcal{U}$ and $\operatorname{cl}(\mathcal{X})$, by continuity of the involved functions: 
    
    (A)  $\nabla k_y(u)$ is Lipschitz, since it  is locally Lipschitz by Assumption~\ref{asm:plant}, and $\nabla\Phi_y (g(x))$ is bounded: hence $k_y(u)^\top \nabla\Phi_y (g(x))$ is smooth in $u$ for any fixed $x$; 
    
    (B) $\nabla k_y(u)$ is bounded, and $\nabla\Phi_y (k_y(\bar u))$ is Lipschitz in $\bar u$ (since it is locally Lipschitz): hence $\nabla k_y(u) ^\top \nabla\Phi_y (k_y(\bar u))$ is Lipschitz in $\bar u $ for any fixed $u$;

    (C)  $\nabla k_y(u)$ is Lipschitz, and $\nabla\Phi_y (k_y(\bar u))$  bounded: hence $k_y(u) ^\top \nabla\Phi_y (k_y(\bar u))$ is smooth in $u$ for any fixed $\bar u$;
    
    Hence we only need  $\Phi_u$ to be sufficiently strongly convex to satisfy the conditions in Lemma~\ref{lem:asmii}(i) (using (A)) and Lemma~\ref{lem:asmiii} (using (B) and (C)).      
    In turn, sufficient strong convexity can be  always enforced by regularization, e.g., by replacing the original cost $\Phi_u$ with $\bar \Phi_u(u) = \Phi_u(u)+ \bar \beta \|u\|^2$, with $\bar \beta>0 $ large enough.   While enabling arbitrary gain $\alpha$ can significantly improve convergence speed, the price to pay is that regularization results in suboptimal steady-state performance with respect to the original cost. This tradeoff between suboptimality and transient performance was already noted  in \cite[Rem.~1]{Bianchi_InfOfo}. One qualitative difference is that the conditions we derive here are not only independent of the time constant of the plant  \eqref{eq:plant}, but also unaffected by any information on the transient behavior---they are only related to the steady-state maps of the plant. 
\end{remark}

\section{Numerical examples}\label{sec:numerics}

We present two simple numerical examples to illustrate
our theoretical results. 

\subsection{LTI system}\label{sec:LTI}

We consider the linear system $\dot x = Ax+Bu +B_w w$, $y =Cx$, where
\begin{align*}
    A = \begin{bmatrix}
        -1 & 1 \\ 0.5 & -1
    \end{bmatrix},  B = \begin{bmatrix}
        1\\ 0
    \end{bmatrix},  B_w = \begin{bmatrix} 0.9 \\ 0 \end{bmatrix},   C = \begin{bmatrix}
        0 & 1
    \end{bmatrix},
\end{align*}
and $w \in \R$ is an unmeasured (constant) disturbance with $|w| \leq 1$. The input is constrained in the set $\mathcal U = [-0.7,1]$. The cost function is given by $\Phi(u,y) = 1.1u^2 + (y-y_{\textnormal{ref}})^2$, with $y_\textnormal{ref} = 2$. 

The steady state output map is given by $k_y(u) = -CA^{-1}[Bu +B_w w] \coloneqq Su +s$. 
The matrix $A$ is Hurwitz, and the system is monotone with respect to the standard order, by \eqref{eq:MonCondition}; thus, Assumptions~\ref{asm:plant}, \ref{asm:cost}, and \ref{asm:monotone} are satisfied.  By Lemma~\ref{lem:asmi}(i), Assumption~\ref{asm:core}(i) holds, while Lemma~\ref{lem:asmii}(ii) ensures Assumption~\ref{asm:core}(ii). Furthermore, 
$k_y(u)^\top \nabla \Phi_y(k_y(\bar u)) = 2 (Su+s)^\top (S\bar u +s - y_{\textnormal{ref}})$.
Then, the function $\tilde \Psi(u,\bar u)$ in Lemma~\ref{lem:asmiii} is $\mu$ strongly convex, with $\mu =2.2$ (because $\Phi_u$ is, and the other addend is linear in $u$). Furthermore, 
$\nabla k_y(u)^\top \nabla \Phi_y(k_y(\bar u)) = 
2  S^\top (S\bar u +s - y_{\textnormal{ref}})
$  is   $\ell$-Lipschitz in $\bar u$ for any $u\in \mc U$, with $\ell =2 \|S\|^2=2$. Hence, Lemma~\ref{lem:asmiii} ensures that Assumption~\ref{asm:core}(iii) is also verified. 
In turn, by Theorem~\ref{th:main}, the \gls{OFO} controller \eqref{eq:OFO} guarantees convergence to the solution of problem \eqref{eq:opt1} for any  $\alpha>0$ (for fixed disturbance).

We simulate the controller in \eqref{eq:OFO} for different values of $\alpha$ (note that $\nabla k_y$ is independent of the unmeasured disturbance $w$). To evaluate how quickly the controller can adapt to time-varying conditions, we choose a piecewise constant disturbance that switches between $-1$ and $1$. The resulting optimal inputs are plotted in blue in Figure~\ref{fig:LTI}. While for large values of $\alpha$ the controller quickly converges, for smaller values of $\alpha$ the controller is slow and does not track accurately the optimal value.

\begin{figure}[h]
\includegraphics[width=\columnwidth]{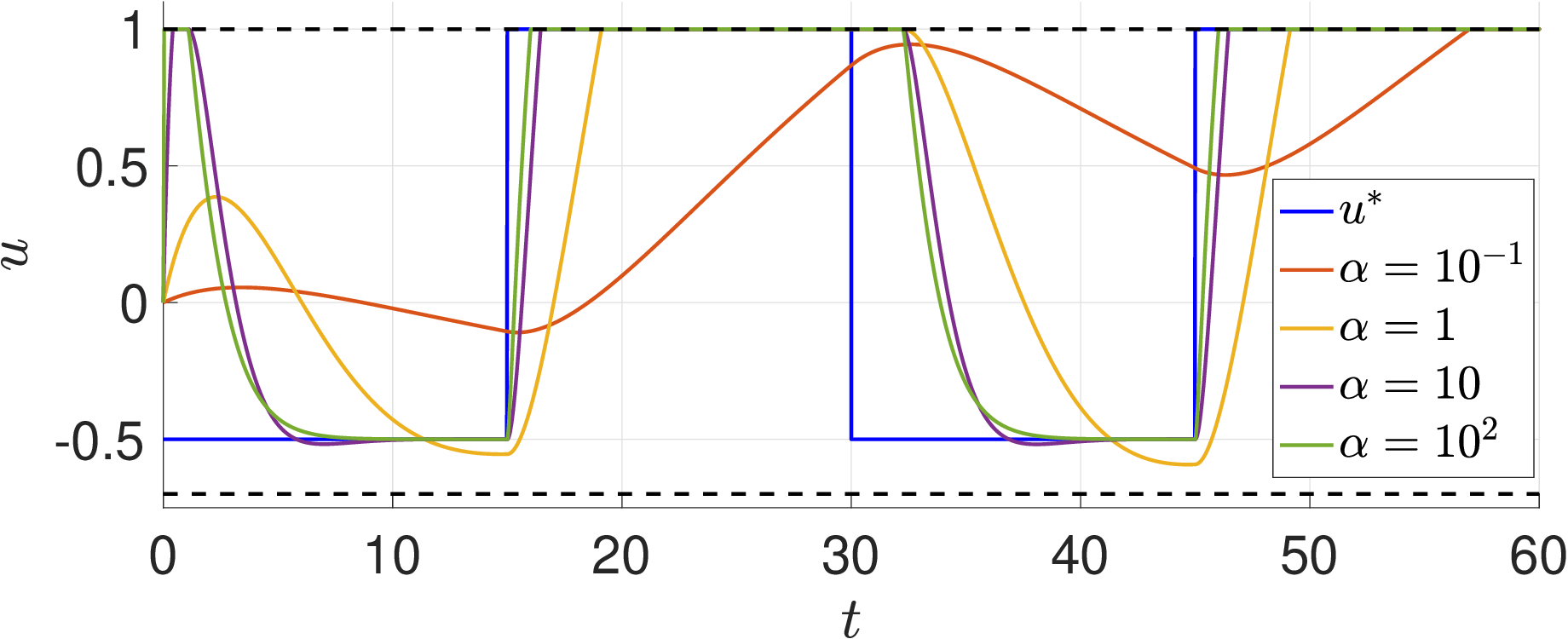}
\caption{Performance of the \gls{OFO} controller \eqref{eq:OFO} for the linear plant in Section~\ref{sec:LTI} and different values of the control gain $\alpha$. Input constraints (dotted lines) are always satisfied.} \label{fig:LTI}
\end{figure}

\subsection{Control of gene expression}\label{sec:NL}

We consider a simplified gene expression model inspired by \cite{Uhlendorf2012}. The goal is to control live yeast cells by applying osmotic shocks, so that the  
production level of a certain protein  tracks a predefined target trajectory.
The system is modeled through the following second-order positive dynamics
\begin{align}\label{eq:gene}
    \dot x_1 & = u - \gamma_1 x_1
    \\
    \dot x_2 & = \theta_2 x_1 - \gamma_2 \frac{x_2}{\theta_1+x_2}, \qquad y= x_2,
\end{align}
where $x_1$ is the recent osmotic stress, $x_2$ is the protein level, and we choose $\theta_1 =750$, $\theta_2=0.58$, $\gamma_1 = 4.02$, $\gamma_2  = 37.5$. 
The cost function is given by $\Phi(u,y) = \frac{\mu}{2} u^2 +(y-y_{\textnormal{ref}})^2$, where  $\mu =20$, $y_{\textnormal{ref}} \in \{0,1,2\}$.  The input $u$ is constrained in $\mathcal{U} = [0, 0.6] $. The system is initialized at $(0,0) $.

For the given bounds $\mathcal U$, the system is converging to an equilibrium for each fixed input, with steady-state output map $k_y(u)=\theta_1\theta_2u/(\gamma_2\gamma_1-\theta_2u)$. Furthermore,
the system is monotone with respect to the standard order, as it can be checked via \eqref{eq:MonCondition}. Hence,  Assumptions~\ref{asm:plant}, \ref{asm:cost}, and \ref{asm:monotone} are satisfied. Assumption~\ref{asm:core}(i) is satisfied by Lemma~\ref{lem:asmi}(i). In the set $\mathcal{U}$, $k_y(u)$ is convex, nonnegative,
Lipschitz with parameter $\sigma = 2.9$, and smooth  with parameter $\eta = 0.03$  (i.e., it has $\eta$-Lipschitz gradient). Consider the function $\tilde\Psi(u,\bar u) = \Phi_u(u) +2k_y(u) k_y(\bar u) -2k_y(u)y_\textnormal{ref}$ from Lemma~\ref{lem:asmiii}. Then, since $y_\textnormal{ref}\leq 2$, $k_y(u)y_\textnormal{ref}$ is $2\eta$-smooth; $k_y(u) k_y(\bar u)$ is convex; thus, $\tilde\Psi(u,\bar u) $ is $(\mu - 4\eta)$-strongly convex in $u$, for any $\bar u$. Furthermore, 
$2\nabla k_y(u) k_y(\bar u) -2\nabla k_y(u)y_\textnormal{ref}$ is $( 2\sigma^2)$-Lipschitz continuous in $\bar u$, for any $u$. Therefore, by Lemma~\ref{lem:asmiii}, Assumption~\ref{asm:core}(iii) is satisfied. Finally, 
we can show that Lemma~\ref{lem:asmii}(i) implies that Assumption~\ref{asm:core}(ii) is also satisfied. In fact, $\Phi_u$ is $\mu$-strongly convex; $k_y(v) x_2$ is convex since $k_y(v)$ is convex and $x$ is nonnegative for all times, due to the positive dynamics \eqref{eq:gene}; and $k_y(v) y_{\textnormal{ref}}$ is $2\eta$ smooth. Hence, the function $\Psi(v,x) = \Phi_u(v)+ 2 k_y(v) x_2- 2 k_y(v) y_{\textnormal{ref}}$ is $\mu -4 \eta $ strongly convex. Hence, by Theorem~\ref{th:main}, the controller \eqref{eq:OFO} guarantees convergence to the solution of \eqref{eq:opt1}, for any value of $\alpha>0$ (and fixed $y_{\textnormal{ref}}$).

Figure~\ref{fig:NL} shows the resulting trajectories for different values of $\alpha$. The reference $y_{\textnormal{ref}}$ is chosen to be piecewise constant; therefore, also the solution $(u^\star,y^\star)$ to \eqref{eq:opt1} is piecewise constant (blue lines). For smaller values of $\alpha$, the controller is slower in tracking the optimal input and outputs. For larger values of $\alpha$, while convergence is retained, there is significant overshoot. Thus, excessively large values for the control gain
might be undesirable. Interestingly, for $\alpha = 10^{-1}, 1,$ and $10^2$, the trajectories are nearly indistinguishable. In this regime, the controller operates significantly faster than the plant ---contrary to the typical setting in \gls{OFO}--- and the plant dynamics become the bottleneck for the overall convergence speed.

\begin{figure}[h]
\includegraphics[width=\columnwidth]{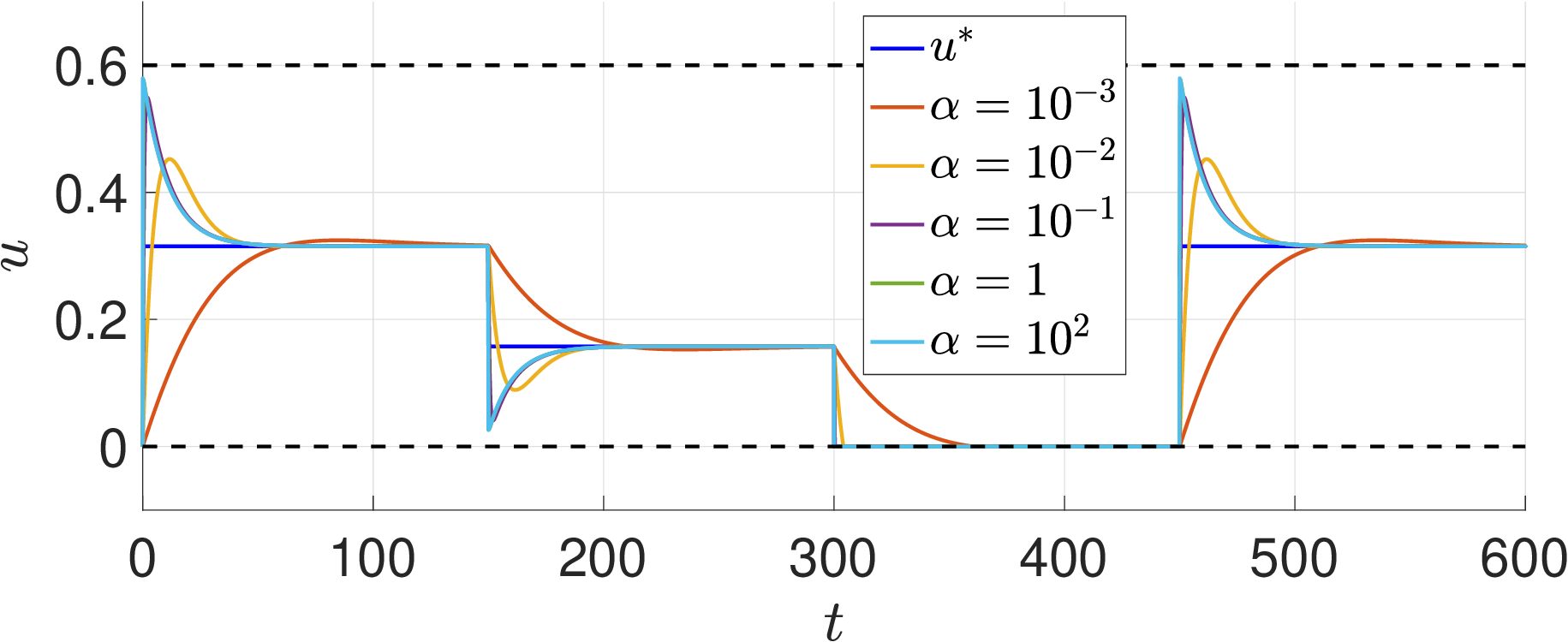}
\\[1em]
\includegraphics[width=\columnwidth]{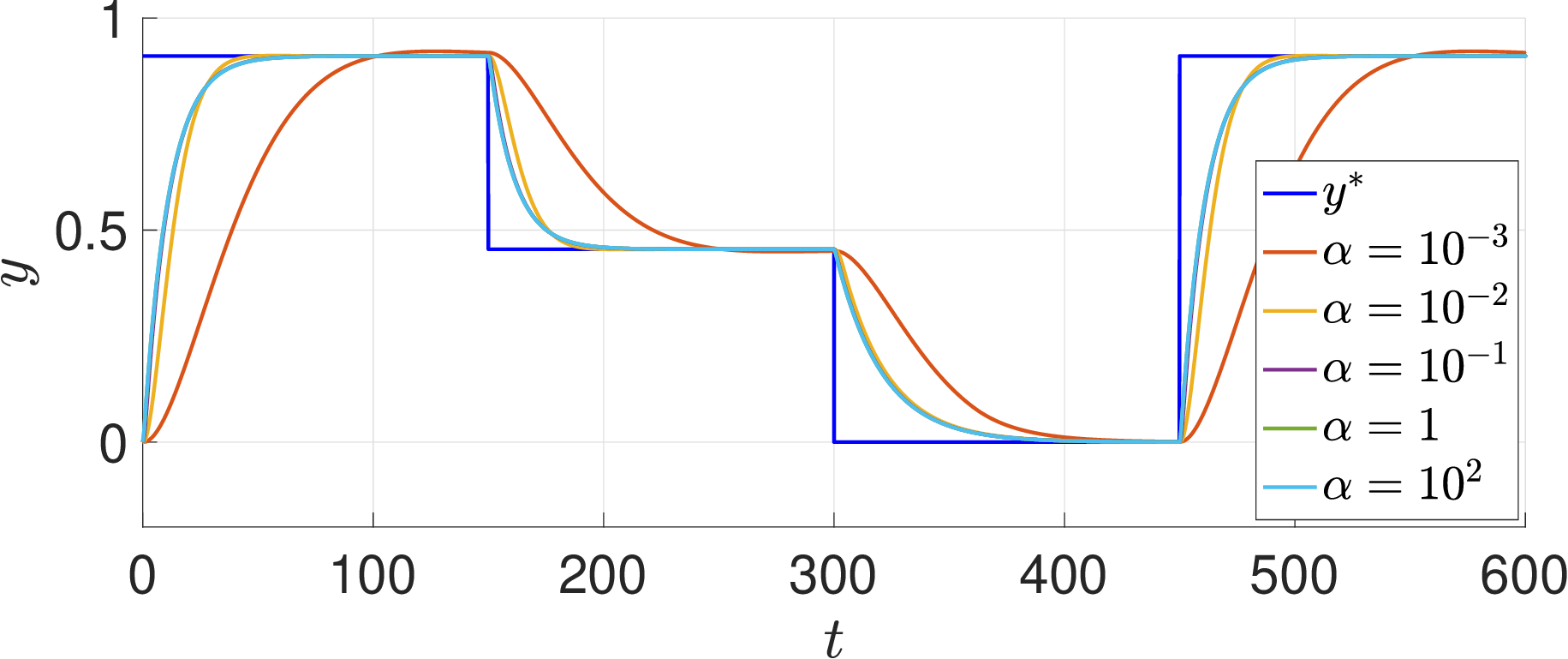}
\caption{Gene expression control for the plant in Section~\ref{sec:NL} in closed-loop with \eqref{eq:OFO}. Input constraints are shown by the dotted line.} \label{fig:NL}
\end{figure}

\section{Research directions}\label{sec:conclusion}

We showed that, under suitable assumptions, online feedback optimization can steer a monotone plant to an optimal operating point, without requiring any timescale separation. Our technical results could be extended to prove stability, rather than mere convergence. In addition, optimization problems with multiple solutions might be addressed  based on results for monotone systems with set-valued steady-state maps; this would also allow one to relax the convexity assumptions on the cost functions. 
Future work will focus on implementing 
more realistic simulation scenarios, to validate the applicability
of our results in biological networks.

An open problem is how to embed output constraints in the design, without compromising monotonicity properties. 
Moreover,  we only focused on
the analysis of an existing \gls{OFO} scheme; the design of new dynamics,   specifically tailored  to avoid timescale separation,  is a significant direction for future research.

\appendix

\subsection{Proof of Lemma~\ref{lem:SGT}}
\label{app:proofofSGT}
We explain how to obtain Lemma~\ref{lem:SGT} from \cite[Th.~2]{EncisoSontag_Global2006}. First,  we note that \cite{EncisoSontag_Global2006} identifies the dynamical system with its behavior, i.e., the function $x(t,x_0,u)$; while we consider the special case that the behavior is generated by the (nonsmooth) differential equation \eqref{eq:introsystem}; this results in a different notation, but no technical complication. We should only prove that the conditions in our Lemma~\ref{lem:SGT} ensure the assumptions H1, H2, H3, H4 in \cite[Th.~2]{EncisoSontag_Global2006}. H1 is satisfied because we restrict $\K_U$, $\K_X$, $\K_Y$ to be orthants. H2 and H3 are guaranteed by 1), as discussed in \cite[p.~553]{EncisoSontag_Global2006}. H4 is satisfied by conditions 1), 2) and 3). In particular, note that 1) and 3) imply that the continuous functions $k_x$ and $g$ must necessarily map bounded sets to bounded sets (as a consequence, $k_x$ is also completely continuous). In fact, $k_x$ and $g$  are defined over the closed sets $U$ and $X$: if $A\subseteq U$ is a bounded set, then $\operatorname{cl}(A) \subseteq U$ is compact, thus $k_x(A) \subseteq k_x(\operatorname{cl}(A))$ is bounded, since continuous functions map compact sets to compact sets; the same for $g$. 

\subsection{Proof of Proposition~\ref{prop:wellposedness}}
\label{app:proofpropWellposedness}
The closed-loop system can be written as the projected dynamical system $\dot \omega = \Pi_{\Omega} (\omega,\mathcal{A}(\omega)$), where $\omega = (x,u)$, $\Omega = \R^n \times \mathcal U$,
\begin{align}
\mathcal{A}(\omega) = \begin{bsmallmatrix}
        f(x,u)
        \\
       - \alpha  H (u) \nabla \Phi (u,g(x))
       \end{bsmallmatrix}.
\end{align}
By Assumptions~\ref{asm:plant}, \ref{asm:cost}, $\mathcal{A}$ is locally Lipschitz. Hence, for any initial condition in $\Omega$, there is a unique local solution $\omega(t) = (x(t),u(t))$, contained in $\Omega$ \cite{Cojocaru2003}. We next show  that $\omega(t)$ is contained in a compact set $\mathcal P$; then, $\mathcal A$ is Lipschitz on $\mathcal P$, hence a unique global solution exists (i.e., $\omega(t)$ can be extended for all $t\geq 0)$. In particular, $u(t) \in \mathcal U$, which is compact by Assumption~\ref{asm:cost}. Furthermore, $\mathcal U$ can be rewritten as $\{u \in \R^m \mid u^{\min} \preccurlyeq_{\K_U} u \preccurlyeq_{\K_U} u^{\max}$\}, for some vector $u^{\max},u^{\min} \in\R^m$. Therefore, in view of Assumption~\ref{asm:monotone}, we can apply \cite[Prop.~V.4]{Angeli_MCS_2003}  and conclude that $x^{\min} (t)\preccurlyeq_{\K_X} x(t) \preccurlyeq_{\K_X} x^{\max}(t) $, where $x^{\min} $ and $x^{\max}$ are the solutions of \eqref{eq:plant} obtained with constant input $u^{\min}$ and $u^{\max}$, respectively. However, $x^{\min} $ and $x^{\max}$ are bounded (and global) by Assumption~\ref{asm:plant}. Hence, $x(t)$ is  contained in a compact set $\mathcal P_x$. The conclusion follows by choosing $\mathcal P=\mathcal P_x \times \mathcal U$.

\subsection{Proof of Theorem~\ref{th:main}}
We start by showing that the  system \eqref{eq:clreformulatedx}-\eqref{eq:clreformulatedv} is monotone with respect to $K_U$, $\K_\Omega \coloneqq\K_X\times \K_V$, $K_Z$, and that it admits a well-defined steady-state map. 
These are known properties for the cascade of monotone systems, e.g.,  \cite[Prop.~IV.I]{Angeli_MCS_2003}; we include a proof for completeness. 

First, choose arbitrary $u(t) \seq_{\K_U} u'(t)$ for almost all $t$ and $\omega_0 = (x_0,v_0) \seq_{\K_\Omega} \omega_0' =  (x_0',v_0')$, i.e., $x_0 \seq_{\K_X} x_0'$ and $v_0 \seq_{\K_V} v_0'$. Then, by Assumption~\ref{asm:monotone}, $x(t)= x(t,x_0,u) \seq_{\K_X} x'(t) = x(t,x_0',u')$; hence, also $v(t,v_0,x) \seq_{\K_V} v(t,v_0',x')$ by Assumption~\ref{asm:core}(i). In other words, $\omega(t,\omega_0,u) \seq_{\K_\Omega} \omega(t,\omega_0',u') $. Also, for the output map $z = \phi(\omega) \coloneqq v$, it  holds that  $\phi (\omega) \seq_{\K_Z}  \phi(\omega')$. In conclusion,  \eqref{eq:clreformulatedx}-\eqref{eq:clreformulatedv} is monotone with respect to $K_U$, $\K_\Omega$, $K_Z$.

Second, for a constant input $u \equiv \bar u$, \eqref{eq:clreformulatedx} converges to $k_x(\bar u)$ by Assumption~\ref{asm:plant}, hence $v$ converges to $k_v(k_x(\bar u))$ by \cite[Th.~1]{EncisoSontag_Global2006} and Assumption~\ref{asm:core}. Since $k_v$ and $k_x$ are continuous, the steady-state map for the system \eqref{eq:clreformulatedx}-\eqref{eq:clreformulatedv} $k_\omega(u) = (k_x(u),k_v(k_x(u)))$ is also continuous. 

We have just proven the conditions 1), 2) and 3) in Lemma~\ref{lem:SGT} (in particular, note that $\K_Z = - \K_U$ by Assumption~\ref{asm:core}, and $\Omega$ is a box set).  Therefore we can infer global convergence of \eqref{eq:clreformulated} if also condition 4) holds, i.e., if the iteration \begin{align}\label{eq:iterationapp}u_{n+1}=k_z(u_n) \end{align} has a global attractor, where $k_z = k_v \circ k_x$ is the input-output steady-state map 
$k_z$ of \eqref{eq:clreformulated}. We next show that the iteration \eqref{eq:iterationapp} coincides with \eqref{eq:iterationmain}, thus concluding the proof. 

To start, the steady-state map $\bar v = k_v(x)$ of \eqref{eq:clreformulatedv} computed at any fixed $x$ has to satisfy   
\begin{align}
    \0 & = \Pi_{\mathcal U} \left(\bar v, - \alpha H(\bar v) \nabla \Phi (\bar v,g(x)) \right)
    \\ & =  \Pi_{\mathcal U} \left(\bar v, - \alpha \nabla \Phi_u(\bar v) - \alpha \nabla  k_y(\bar v)^\top  \nabla \Phi_y(g(x)) \right). \label{eq:usefulGradient}
\end{align}
Note that \eqref{eq:usefulGradient} corresponds to the equilibrium condition for the projected gradient flow applied to the function $\psi(\cdot) \coloneqq \Phi_u(\cdot)+ k_y(\cdot)^\top \nabla \Phi_y(g(x))$; hence, $\bar v = k_v(x)$ is the unique critical point of $\psi$ over $\mathcal U$ (otherwise the steady-state map would not be defined, violating  Assumption~\ref{asm:core}(i)). Since $\mathcal U$ is compact, $\psi$ achieves a minimum in $\mathcal U$, which must be a critical point. Hence, 
\begin{align}
    k_v(x) = \argmin_{v\in\mathcal U} \Phi_u(v)+ k_y(v)^\top \nabla \Phi_y(g(x)).
\end{align}
Therefore, by recalling that $g\circ k_x = k_y$ and since $z =v$, we can compute $k_z(u) = k_v(k_x(u))$ as 
\begin{align}
    k_z(u) = \argmin_{v\in \mathcal U} \Phi_u(v)+ k_y(v)^\top \nabla \Phi_y(k_y(u)).
\end{align}

\subsection{Preliminary results for the proof of Lemma \ref{lem:asmi}}

To prove Lemma~\ref{lem:asmi}, 
we will need some general results. In particular,  Lemma~\ref{lem:sensitivityprojected} proves continuity of the solutions of projected dynamical systems to perturbations. We use this result to prove
Theorem~\ref{th:monotoneProjected} below, which essentially shows that the projection on the tangent cone of a box set does not affect monotonicity with respect to an orthant.

\begin{lemma}\label{lem:sensitivityprojected} 
Consider a closed convex set $\mathcal{X} \subseteq \R^n$ and the projected dynamical system
    \begin{align}\label{eq:epsilonsystem}
        \dot x_\epsilon = \Pi_\mathcal{X} (x_\epsilon,q(x_\epsilon,t) - \epsilon v) 
    \end{align}
    where $\epsilon \geq 0$ (possibly $0$), $v\in\R^n$ is a constant vector, $q$ is Lipschitz in $x$ (uniformly in $t$) and continuous in $t$.
    Let  $x_\epsilon(t) = x_\epsilon(t, t_0,x_0)$ be the unique solution of \eqref{eq:epsilonsystem} with initial condition $(t_0,x_0)$.  Then, for any $T>0$, there is $L_T>0$ such that, for any $\epsilon, \epsilon' \geq 0$,
    \begin{align}
        \sup_{t\in [t_0,t_0+T]} \| x_\epsilon (t) - x_{\epsilon '} (t)\| \leq L_T |\epsilon - \epsilon'|. 
    \end{align}
\end{lemma}
\medskip 

\begin{proof}
    For almost all $t$, there is $n_\epsilon \in \mathrm N_\mathcal{X} (x_\epsilon)$ (where $\mathrm N_\mathcal{X}$ is the normal cone of $\mathcal X$) such that 
    \begin{align}
        \dot x_\epsilon  = q(x_\epsilon,t) -\epsilon v - n_\epsilon,
    \end{align}
    and similarly for $\dot x_{\epsilon'} = q(x_{\epsilon'},t) -\epsilon'v-n_\epsilon'$, for some $n_\epsilon'\in \mathrm N_\mathcal{X}(x_\epsilon') $. Define the error $e = x_\epsilon-x_\epsilon'$, and $w = n_\epsilon  - n_{\epsilon '}$. Hence, for almost all $t$,
    \begin{align*}    
        \frac{d}{dt} \frac{1}{2} \|e\|^2 & = \langle e, q(x_\epsilon,t) - q(x_{\epsilon'},t) \rangle  - \langle  e,w \rangle  -  \langle e, (\epsilon-\epsilon') v\rangle 
        \\ & \leq L_q \|e\|^2   + |\epsilon-\epsilon'| \| v\| \|e\|
\end{align*}
where we used Lipschitz continuity of $q$ and  that $\langle e,w \rangle \geq 0 $ by monotonicity of the normal cone. Hence, by Young's inequality, for almost all $t$,
\begin{align*}
    \frac{d}{dt} \|e\|^2 \leq (2L_q+1)\|e\|^2  + \|v\|^2 |\epsilon- \epsilon'|^2. 
\end{align*}
    Therefore, by  Gr\"onwall's inequality, and using that $x_\epsilon(t_0) = x_\epsilon'(t_0) = x_0$ and thus $e(t_0) = 0$, it holds for all $t$ that
    \begin{align*}
        \|e(t)\|^2& \leq \left( \frac { \|v\|^2 (e^{( 2L_q + 1) (t-t_0) } -1) } {2L_q+1} \right) |\epsilon - \epsilon '|^2 
        \\ &  \coloneqq L_t ^2  |\epsilon - \epsilon '| ^2. 
    \end{align*}
    which concludes the proof. 
\end{proof}

\medskip
\begin{theorem}\label{th:monotoneProjected}
    Let  $\mathcal X \subseteq \R^n$ be a box set, and let $\mathcal K_{U}$ and $\mathcal K_{X}$ be orthants. Consider the projected dynamical system $
        \dot x = \Pi_{\mathcal X}(x,q(x,u)) $,
    where $q$ is Lipschitz in $x$ on $\mathcal X$ (uniformly in $u$) and continuous in $u$, and the input $u:(t_0,\infty) \rightarrow \R^m$ is continuous. Denote by $x(t) = x(t,t_0,x_0,u)$ the  solution of this system with initial condition $x_0\in \mathcal X$. Assume that
    \begin{align}\label{eq:monotonicitycondition}
        u \succcurlyeq_{\mathcal K_U} u', x\succcurlyeq_{\mathcal K_X} x' \Rightarrow q(x,u) - q(x',u') \in \mathrm{T}_{\mathcal K_X}(x-x').
    \end{align}
    If $x_0 \succcurlyeq_{\mathcal{K}_X }x_0'$ and  and $u(t)\succcurlyeq_{\mathcal{K}_U }u'(t)$ for all $t\geq 0$, then  $x(t,x_0,u) \succcurlyeq_{\mathcal K_X} x(t,x_0',u')$, for all $t\geq 0$. 
\end{theorem}
\medskip

\begin{proof}
For simplicity of notation, and without loss of generality, we prove the result for the standard orderings, i.e., $\mathcal K_U =\R^m_{\geq 0}$, $\mathcal K_X=\R^n_{\geq 0}$.
Let $\mathcal X = [x_1^{\min},x_1^{\max}] \times \dots \times [x_n^{\min},x_n^{\max}]$. Note that, since $\mathcal X$ is a box, the projection on its tangent cone is order preserving, meaning that pointwise
    \begin{align}\label{eq:monotonicitypointwise}
       \begin{multlined}
        [x]_i \geq [x']_i, [v]_i \geq [v']_i  \\ \Rightarrow [\Pi_{\mathcal{X}}(x,v)]_i -   [\Pi_{\mathcal{X}}(x',v')]_i \in \mathrm{T_{\R_{\geq 0}}}([x]_i - [x']_i) 
        \end{multlined}
    \end{align}
    
    Let us consider the perturbed system $\dot x'_\epsilon = \Pi_{\mathcal X} (x'_\epsilon,q(x'_\epsilon,u') - \epsilon \boldsymbol{1})$, with $\epsilon>0$. We next show that $x'_\epsilon (t) = x'_\epsilon(t,t_0,x_0',u')$ satisfies $ x'_\epsilon (t) \preccurlyeq x(t) $ for all $t$ (independently of $\epsilon)$. Since this holds for any $\epsilon>0$, the conclusion follows by Lemma~\ref{lem:sensitivityprojected}, by taking the limit $\epsilon \rightarrow 0$.

    For contradiction, assume for some $t$ we have $x(t) \not\succcurlyeq x'_\epsilon(t)$, i.e.,   $[x(t)]_i < [x_\epsilon'(t)
]_i $ for some $i \in \{1,\dots,n\}$. Choose $   t^* = 
    \inf \{ t \mid x(t) \not\succcurlyeq x_\epsilon'(t)\}$, so that, by continuity of Carathéodory solutions, for some $\delta>0$,
        \begin{align}
       \begin{aligned}(\forall t \leq t^*) \ x(t) \succcurlyeq x_\epsilon'(t), \ [x(t^*)]_i = [x'_\epsilon(t^*)]_i,
       \\
       \ (\forall t \in (t^*,t^*+\delta)) \ [x(t)]_i < [x_\epsilon ' (t)]_i.
       \end{aligned}
    \end{align}
    We next show that there is $\rho>0$ such that $[x(t)]_i \geq [x_\epsilon'(t)]_i$ for all $t \in [t^*,t^*+\rho)$, contradicting the definition of $t^*$; this in turn implies $x(t) \succcurlyeq x_\epsilon'(t)$ for all $t \geq t^*$, hence the theorem. Note that $x(t) \in\mathcal X$ and $x_\epsilon'(t) \in \mathcal X$ for all $t$, and at $t^*$ it holds that $[q(x,u)]_i \geq [q(x_\epsilon',u')]_i $.  We consider the (five) possible different cases: 

    (A) At $t^*$, $[x]_i=[x_\epsilon']_i= x_i^{\max}$ and $[q(x,u)]_i >0$. Then, by continuity of $u$, $q$, and of the Carathéodory solution $x$, there is $\rho>0$ such that $[q(x,u)]_i >0$ for all $t \in [t^*,t^*+\rho)$. In this interval almost everywhere $ [\dot x ]_i 
    = [\Pi_\mathcal{X}(x,q(x,u))]_i \geq \min \{0,[q(x,u)]_i\} \geq 0$, where the first inequality is the definition of $\Pi_{\mathcal X}$ as in \eqref{eq:explicit_ptc}. Hence $[x(t)]_i = x_i^{\max} \geq [x_\epsilon'(t)]_i$ for all $t\in  [t^*,t^*+\rho)$. 

    (B) At $t^*$, $[x]_i=[x_\epsilon']_i= x_i^{\min}$ and $[q(x_\epsilon',u')]_i -\epsilon <0$: symmetric to case (A). 

    (C) At $t^*$, $[q(x,u)]_i\leq 0$ and  $[x]_i=[x_\epsilon']_i > x_i^{\min}$. By continuity of $u,q,x,x_\epsilon$, there is $\rho>0$ such that, for all $t\in [t^*,t^*+\rho)$, $[x]_i >x_i^{\min}$, $[x_\epsilon']_i > x_i^{\min}$, and $[\Pi_{\mathcal X} (x,q(x,u))]_i> [\Pi_{\mathcal X} (x_\epsilon',q(x_\epsilon',u')-\epsilon \boldsymbol 1)]_i$. Hence $[\dot x]_i > [\dot x_\epsilon ']$ almost everywhere in $[t^*,t^*+\rho)$, and thus $[x]_i \geq [x_\epsilon ']_i$. 

    (D) At $t^*$, $[q(x_\epsilon',u')]_i-\epsilon \geq 0$ and  $[x]_i=[x_\epsilon']_i < x_i^{\max}$: symmetric to case (C). 

    (E) At $t^*$, $[q(x_\epsilon',u')]_i - \epsilon  \geq  0$ and $[x]_i=[x_\epsilon']_i = x_i^{\max}$, or $[q(x,u)]_i \leq  0$ and $[x]_i=[x_\epsilon']_i = x_i^{\min}$: included in cases (A) and (B). 
\end{proof}

Note that the condition in \eqref{eq:monotonicitycondition} needs to be checked only if $[x]_i = [x']_i$, for some $i \in \{ 1, \dots, m\}$, since otherwise $[\mathrm T_{\mathcal K_X}(x-x')]_i = \R$. Theorem~\ref{th:monotoneProjected} only consider continuous inputs, while our definition of monotone systems is given for measurable inputs.  This is not an issue, since $x$ in \eqref{eq:clreformulatedv} is continuous, and Lemma~\ref{lem:SGT} still holds if restricting the attention to continuous inputs (see \cite{EncisoSontag_Global2006})

\subsection{Proof of Lemma~\ref{lem:asmi}}
    \emph{(i)} Since $m = 1$, we just need to check \eqref{eq:monotonicitycondition} for $v=v'$ (with $x,v$ in place of $u,x$). Let \begin{align}
        q(v,x) & \coloneqq - \alpha H(v) \nabla \Phi(v,g(x)) 
        \\ & = - \alpha \nabla \Phi_u(v)- \alpha \nabla k_y (v)^\top \nabla \Phi_y(g(x)).
    \end{align} Note that $k_y$ is nondecreasing, since it is the steady-state map of a monotone system \cite[Rem~V.2]{Angeli_MCS_2003}, hence $\nabla k_y(v)$ is nonnegative for all $v$; $\nabla \Phi_y$ is nondecreasing by convexity; $g(x)$ is also nondecreasing by Assumption~\ref{asm:monotone}. Therefore $x \seq x' \Rightarrow q(v,x) - q(v,x') 
    \leq 0$, and the conclusion follows by Theorem~\ref{th:monotoneProjected}.
    
    \emph{(ii)} The matrix $S$ must be nonnegative by \cite[Rem~V.2]{Angeli_MCS_2003} and Assumption~\ref{asm:monotone}; $g$ is also nondecreasing by Assumption~\ref{asm:monotone}.
    Under the conditions imposed, $v \preccurlyeq v' \Rightarrow \nabla \Phi_u(v)-\nabla \Phi_u(v) \in \mathrm T_{\R_{\geq 0}^m}(v-v')$ \cite{Angeli2004}; and $\nabla \Phi_y$ is nondecreasing (in the standard order, $x \succcurlyeq x' \Rightarrow \nabla \Phi_y(x) \succcurlyeq \nabla\Phi_y(x')$) \cite{Angeli2004}. 
    Therefore,  the condition \eqref{eq:monotonicitycondition} holds separately for the two components in $q(v,x)$, namely $-\alpha \nabla\Phi_u(v)$ and $-\alpha S^\top \nabla \Phi_y(g(x))$, and overall $x \succcurlyeq x', v\preccurlyeq v'  \Rightarrow q(v,x) -q(v',x') \in \mathrm{T}_{\R_{\leq 0}^m} (v-v')$. Hence the conclusion follows by Theorem~\ref{th:monotoneProjected}. 

    \subsection{Proof of Lemma~\ref{lem:asmii}} 
    \emph{(i)} If the function $\Psi$ is strictly convex, then its projected gradient flow \eqref{eq:clreformulatedv} converges to the unique minimizer (over the compact $\mathcal U$), for any fixed $x\in \R^n$. Furthermore, $x\mapsto \argmin_{u\in\mathcal U} \Psi(u,x)$ is a continuous function whenever $\mathcal U$ is compact and $\Psi$ is continuous and has a unique minimizer for each $x$, see \cite[Lem.~3]{Bianchi_Mon_CDC_2022} for an analogous proof (where strong convexity  replaces the role of compactness). 
    
    \emph{(ii)} This is a sufficient condition for (i). 

   \subsection{Proof of Lemma~\ref{lem:asmiii}}
For any $w,w' \in \mathcal U$, let $u = \argmin_{v\in \mc{U}} \tilde\Psi (v,w)$ and $u' = \argmin_{v\in \mc{U}} \tilde\Psi (v,w')$. We have the first order optimality relations 
\begin{align*}
    \langle \nabla_u \tilde\Psi (u,w), u'-u \rangle & \geq 0
    \\ 
     \langle \nabla_u \tilde\Psi (u',w'), u-u' \rangle & \geq 0.
\end{align*}
Summing up, and adding and subtracting $ \nabla_u \tilde\Psi (u,w')$ on the left side, we obtain
$\langle \nabla_u \tilde\Psi (u,w) -\nabla_u \tilde\Psi (u,w'), u'-u\rangle  + \langle \nabla_u \tilde\Psi (u,w')  - \nabla_u \tilde\Psi (u',w'), u'-u \rangle  \geq 0$. By using the Cauchy--Schwarz inequality and smoothness for the first addend, and strong convexity for the second, we obtain
\begin{align*}
      & \ell \| w -w'\|\|u-u'\| - \mu \| u - u'\| ^2 \geq 0
      \\ \Rightarrow  &  \| u-u'\| \leq \textstyle  \frac{\ell}{\mu} \|w -w'\|. \numberthis \label{eq:contractivity}
\end{align*}

For the iteration in \eqref{eq:iterationmain}, note that it can be rewritten as
\begin{align*}
    u_{n+1} = \argmin_{u\in\mathcal U} \tilde \Psi (u,u_n).
\end{align*}
Let $u^\star$ be a fixed point of this iteration (one exists by existence of solutions to \eqref{eq:opt1}). Then by \eqref{eq:contractivity} we have
\begin{align} \| u_{n+1} - u^\star\| \leq \textstyle \frac{\ell}{\mu} \| u_{n} - u^\star\|, \end{align}
and recursively 
\begin{align}
    \|u_{n+1} - u^\star\| \leq \left(\frac{\ell}{\mu}\right)^{n+1} \|u_0-u^\star\|,
\end{align}
which shows that $u_n$ converges to $u^\star$, since $\ell/\mu <1$ (and thus the fixed point $u^\star$ is unique).

\subsection{Proof of Corollary~\ref{cor:linear}}
The conditions in the statement imply Assumption~\ref{asm:core}, hence the conclusion follows by Theorem~\ref{th:main}. Specifically, satisfaction of Assumption~\ref{asm:core}(i) follows by Lemma~\ref{lem:asmi}(ii). Satisfaction of Assumption~\ref{asm:core}(ii) follows by Lemma~\ref{lem:asmii}(ii). Finally, note that $\tilde \Psi(u,\bar u) = \Phi_u(u)+k_y(u)^\top \nabla \Phi_y(k_y(\bar u ) ) = \Phi_u(u)+  2\beta_y [Su]^\top [S\bar u]  $ is $2\beta_u$-strongly convex in $u$  and furthermore $\nabla_u \tilde \Psi(u,\bar u) = 2\beta_y S^\top S \bar u $ is Lipschitz in $\bar u$  with parameter ($ 2\beta_y   \|S\|^2$) for any fixed $ u$. Then satisfaction of Assumption~\ref{asm:core}(iii) can be inferred by Lemma~\ref{lem:asmiii}.  


\bibliographystyle{IEEEtran}
\bibliography{library}

\end{document}